\documentclass[10pt,twoside]{amsart}




\usepackage{amssymb}
\usepackage{graphicx,color}
\usepackage[all]{xy}
\usepackage{url}






\newtheorem{theorem}{Theorem}[section]

\newtheorem{lemma}[theorem]{Lemma}

\theoremstyle{definition}
\newtheorem{defn}[theorem]{Definition}
\newtheorem*{notation}{Notation}
\newtheorem{example}[theorem]{Example}

\theoremstyle{remark}
\newtheorem{remark}[theorem]{Remark}
\newtheorem{conj}[theorem]{Conjecture}

\newtheorem{question}[theorem]{Question}
\newtheorem*{acks}{Acknowledgments}




\newcommand{\ZZ}{\mathbb{Z}}
\newcommand {\PP}{\mathbb{P}}
\newcommand{\CC}{\mathbb{K}}

\newcommand{\xd}{(x_{0},x_{1}\dotsc, x_{d-1})}

\newcommand{\abs}[1]{\lvert#1\rvert}




\DeclareMathOperator{\per}{per}
\DeclareMathOperator{\Circ}{Circ}
\DeclareMathOperator{\sgn}{sgn}
\DeclareMathOperator{\LCM}{LCM}
\DeclareMathOperator{\GCD}{GCD}
\DeclareMathOperator{\Image}{Im}
\DeclareMathOperator{\Proj}{Proj}
\DeclareMathOperator{\GF}{GF}




\newcommand{\myarrow}[2]{\hbox to #1pt{\hfil$\to$\hfil}{\hskip-#1pt{\raise
10pt\hbox to#1pt{\hfil$\scriptscriptstyle #2$\hfil}}}}

\begin{document}

\title{Circulant matrices and Galois-Togliatti systems}

\author[P. De Poi]{Pietro De Poi}
\address{Dipartimento di Scienze Matematiche, Informatiche e Fisiche, Universit\`{a} degli Studi di Udine,
Via delle Scienze 206, 33100 Udine, Italy}
\email{pietro.depoi@uniud.it}

\author[E. Mezzetti]{Emilia Mezzetti}
\address{Dipartimento di Matematica e  Geoscienze, Universit\`a di
Trieste, Via A.Valerio 12/1, 34127 Trieste, Italy}
\email{mezzette@units.it}

\author[M. Micha{\l}ek]{Mateusz Micha{\l}ek}
\address{
Max Planck Institute for Mathematics in the Sciences, Inselstr.~22, 04103 Leipzig, Germany
 and
Mathematical Institute of the Polish Academy of Sciences, \'{S}niadeckich 8, 00-956
Warszawa, Poland}
\email{wajcha2@poczta.onet.pl}

\author[R. Mir\'o-Roig]{Rosa Maria Mir\'o-Roig}
\address{Facultat de Matem\`atiques i Informatica, Universitat de Barcelona, Gran Via de les Corts Catalanes 585, 08007 Barcelona,
Spain}
\email{miro@ub.edu}

\author[E. Nevo]{Eran Nevo}
\address{Institute of Mathematics, Hebrew University,
Givat Ram, Jerusalem 91904, Israel}
\email{ nevo@math.huji.ac.il}

\begin{abstract} The goal of this article is to compare the coefficients
in the expansion  of the permanent with
those
 in the expansion of the determinant of a three-lines circulant matrix.
As an application we solve a conjecture stated in \cite{MM-R2} concerning the  minimality of GT-systems.
\end{abstract}

\thanks{Acknowledgments: The first two authors are  members of INdAM - GNSAGA and are supported by PRIN
``Geometry of algebraic varieties''.  The third author was supported by the Polish National Science Centre grant no.~2015/19/D/ST1/01180. The fourth   author was partially   supported
by  MTM2016--78623-P.
The fifth  author was partially   supported
by grants ISF1695/15, ISF-NRF2528/16 and ISF-BSF2016288}
\keywords{Circulant matrix, permanent, weak Lefschetz
property, Laplace equations, monomial ideals, Togliatti systems}
\subjclass[2010]{15B05, 15A05, 13E10, 14M25}

\maketitle



\date{\today}


\section{Introduction}
Circulant matrices appear naturally in many areas of mathematics. In the last decades, for instance, they have been 
related to
holomorphic mappings (\cite{D}), cryptography, coding theory (\cite{GK}),
 digital signal processing (\cite{FM}), image compression (\cite{NMG}), physics (\cite{B}), engineering simulations, number theory,  theory of statistical designs (\cite{KKNK}), etc. Even if the basic facts about these matrices can be proved in elementary way, many questions about them are subtle and remain still open (see, for instance, \cite{K-S}).

Our interest in this topic was originally motivated by its connections, exposed in \cite{MM-R2}, with a class of homogeneous ideals of a polynomial ring failing the Weak Lefschetz Property. In that context, the first question relevant to us was that of determining which monomials in the entries of a ``generic'' circulant matrix appear explicitly in the development of its determinant.

More precisely, let us denote by $\Circ \xd$  the circulant matrix of the form
\begin{equation*}
\begin{pmatrix}
x_0&x_1&x_2&\dotsb&x_{d-1}\\
x_{d-1}&x_0&x_1&\dotsb&x_{d-2}\\
x_{d-2}&x_{d-1}&x_0&\dotsb&x_{d-3}\\
\vdots&\vdots&\vdots&\ddots&\vdots\\
x_1&x_2&x_3&\dotsb&x_0
\end{pmatrix},
\end{equation*}
where $x_0, \dotsc, x_{d-1}$ are complex numbers, or more generally elements of a ring. Every summand of the determinant
$\det \Circ \xd$ is of the form
$$c_{i_0, \dotsc,i_{d-1}}x_0^{i_0}\dotsm x_{d-1}^{i_{d-1}},$$ where $c_{i_0, \dotsc,i_{d-1}}\in \ZZ$ and $i_0+\dotsm +i_{d-1}=d$. 
The question is: for which indices $i_0, \dotsc,i_{d-1}$ is the coefficient $c_{i_0, \dotsc,i_{d-1}}$ different from zero?

An analogous question can be posed for the permanent of $\Circ \xd$. In this case the answer was given in \cite{BN}, where it was proved that the monomials appearing with non-zero coefficient are precisely those whose exponents satisfy the two conditions:
\begin{equation}\label{eq:1}
\begin{cases}
i_0+\dotsm +i_{d-1}=d\\
0i_0+1i_1+2i_2+\dotsm +(d-1)i_{d-1}\equiv 0 \pmod d.
\end{cases}
\end{equation}

Clearly, conditions \eqref{eq:1} are necessary for the non-vanishing of the coefficient $c_{i_0, \dotsc,i_{d-1}}$ in  $\det \Circ \xd$. It has been recently proved that they are also sufficient if and only if $d>1$ is a power of a prime number (\cite{T}, \cite{CMMRS}).

A more general question is to find a formula for the coefficient $c_{i_0, \dotsc,i_{d-1}}$. This problem had already been considered in 1951 by Ore \cite{O}, who gave an explicit expression. Other expressions were given more recently in \cite{Mal}, \cite{WY}. However, they are not always easy to apply in order to decide if $c_{i_0, \dotsc,i_{d-1}}$ vanishes or not.

In this article, we are interested in the so-called $r$-lines circulant matrices, i.e.~circulant matrices $\Circ \xd$ of order $d>r$, where $d-r$ among $x_0, \dotsc, x_{d-1}$ are specialized to $0$. We ask if for some pairs $(r,d)$, $r<d$, conditions \eqref{eq:1} are sufficient for the non-vanishing of the corresponding coefficient in the determinant.

Our main result is
Theorem \ref{mainTHM}, where we prove that conditions \eqref{eq:1} are sufficient in the case of $3$-lines circulant matrices of order $d$, of the form 
$$\Circ (x,0,\dotsc, 0, y, 0, \dotsc, 0, z, 0, \dotsc, 0),$$ where $y$ appears in position $a$ (counting from zero), $z$ appears in position $b$, and $\GCD(a,b,d)=1$.


We also give examples of:
\begin{itemize}
\item $3$-lines circulant matrices with $\GCD(a,b,d)\neq 1$,
\item $r$-lines circulant matrices with $r\geq 4$ and similar $\GCD$ equal to one,
\end{itemize}
for which the analogous property fails.
Moreover, we prove that the coefficient of any specific monomial in a $3$-lines circulant determinant is always equal, up to the sign, to the analogous coefficient in the permanent of the same matrix under the assumption $\GCD(a,b,d)=1$.

Our results are inspired by and extend a previous result, concerning $3$-lines circulant matrices of the special form
$\Circ (x, y,  0, \dotsc, 0, z, 0, \dotsc, 0)$, given by Loehr, Warrington and Wilf~\cite{LWW}. This case, i.e.~$a=1$, was also studied by Codenotti and Resta~\cite{CR} who gave an expression for twice the permanent as a sum of four related determinants. This setting easily extends to the cases when at least one of $\GCD(a,d)$, $\GCD(b,d)$, $\GCD(b-a,d)$ equals $1$.


The second part of this article is devoted to describing applications of Theorem \ref{mainTHM}; it concerns mainly the minimality of Galois-Togliatti systems in three variables. These systems, abbreviated GT-systems, form a class of ideals of a polynomial ring introduced and studied in \cite{MM-R2}. A GT-system in three variables is the homogeneous artinian ideal $I^d_{a,b}$ of $R:=\CC[x_0, x_1, x_2]$, generated by all forms of degree $d$, that are invariant under the action of a diagonal matrix $M_{a,b}:=\begin{pmatrix}
1&0&0\\
0&e^a&0\\
0&0&e^b
\end{pmatrix}$, where $e$ is a $d$-th root of the unit.  Note that the group generated by $M_{a,b}$ is the cyclic group of order $d$ provided $\GCD(a,b,d)=1$, and that all actions of this group on $R$ can be represented by a matrix of the form $M_{a,b}$.
In \cite{MM-R2} it was proved that, if $\GCD(a,b,d)=1$, then the ideal $I^d_{a,b}$ is a Togliatti system. This means that it fails the Weak Lefschetz Property in degree $d-1$, i.e. for a general linear form $L$ (equivalently, for all linear forms), the multiplication map
$\times L\colon (R/I^d_{a,b})_{d-1}\longrightarrow (R/I^d_{a,b})_{d}$ is not injective. The authors then conjectured that $I^d_{a,b}$ is a minimal Togliatti system. As an application of Theorem \ref{mainTHM} we prove this conjecture.


 Next we outline the structure of this paper. Section \ref{Three-lines circulant matrices} contains our main results about circulant matrices. After introducing $r$-lines circulant matrices, we give a precise formulation of the problems we want to study (Question \ref{question}). We then state our main theorems (Theorems \ref{mainTHM} and \ref{thm:coef}) and produce some examples showing that our results are optimal (Examples \ref{ex:gcd} and \ref{ex:r>3}). Subsection \ref{structure} contains the proofs of the two theorems; it relies on a series of lemmas, aiming to describe the structure, in the symmetric group on $d$ elements, of the permutations that contribute non-trivially in the development of the circulant determinants we study. Section \ref{sectionGT}  is devoted to the application of the results of Section \ref{Three-lines circulant matrices} to Togliatti systems. We first recall the background about the Weak Lefschetz Property and Togliatti systems, in particular minimal monomial Togliatti systems and GT-systems, and  the conjecture on the minimality of GT-systems in three variables. Finally, Theorem \ref{thmGT}  shows how the conjecture follows from the results of Section \ref{Three-lines circulant matrices}.
In Section~\ref{sec:complexity} we indicate a computational complexity application of our main result.

\begin{notation}
Throughout this paper $\CC$ will be   an algebraically closed field of characteristic zero, $R=\CC[x_0,x_1,\dotsc ,x_n]$ and $\PP^n=\Proj(\CC[x_0,x_1,\dotsc ,x_n])$. For any polynomial $F\in R$, we denote by $[F]_{i_0,i_1,\dotsc ,i_n}$  the coefficient of the monomial $x_0^{i_0}x_1^{i_1}\dotsm x_n^{i_n}$ in $F$. Hence, we have $F=\sum_{i_0,i_1,\dotsc ,i_n}  [F]_{i_0,i_1,\dotsc ,i_n}x_0^{i_0}x_1^{i_1}\dotsm x_n^{i_n}$.
Let $S_d$ denote the symmetric group on $d$ elements.
\end{notation}

\begin{acks} This work was started at the workshop ``Lefschetz Properties and Jordan Type
in
Algebra, Geometry and Combinatorics,''
held at Levico (Trento) in June 2018.
The authors thank the Centro Internazionale per la Ricerca Matematica (CIRM) for its support.
We also thank Nati Linial and Amir Shpilka for helpful discussions on the computational complexity aspects and for pointing us to~\cite{CR}.
\end{acks}

\section{Three-lines circulant matrices}\label{Three-lines circulant matrices}

This section is devoted to the study of circulant matrices and their determinant and permanent. They have been previously studied by Ore \cite{O}, Kra and Simanca \cite{K-S},
Wyn-Jones \cite{WY} and Malenfant  \cite{Mal}; we will mainly follow Loehr, Warrington and  Wilf \cite{LWW}.
Let us start by recalling their definition:

\begin{defn}
Let $M=(y_{i,j})$ be a $d\times d$ matrix. $M$ is a \emph{circulant matrix} if,
and only if $y_{i,j}=y_{k,l}$ whenever $j-i\equiv l-k\pmod d$. That is, $M$ is
of the type
\begin{equation*}
\begin{pmatrix}
x_0&x_1&x_2&\dotsb&x_{d-1}\\
x_{d-1}&x_0&x_1&\dotsb&x_{d-2}\\
x_{d-2}&x_{d-1}&x_0&\dotsb&x_{d-3}\\
\vdots&\vdots&\vdots&\ddots&\vdots\\
x_1&x_2&x_3&\dotsb&x_0
\end{pmatrix}
\end{equation*}
where successive rows are circular permutations of the first row. It is a particular form of a Toeplitz matrix, i.e. a matrix whose elements are constant along the diagonals.
For short we denote such matrices as $\Circ_d\xd$  or simply $\Circ _d$.
\end{defn}

We now define an $r$-\emph{lines circulant matrix} as follows:
we fix an integer $r\leq d$ and an $r-$tuple of  integers
$0\leq \alpha_{0}<\dotsb<\alpha_{r-1}\leq d-1$ and  define $$\Circ_{(d;\alpha_{0},\dotsc,\alpha_{r-1})}:=(\Circ_{d})(0,\dotsc,0,x_{\alpha_{0}},0,\dotsc,0,x_{\alpha_{i}},
0,\dotsc,0,x_{\alpha_{r-1}},0\dotsc,0)$$ where $x_{\alpha _{i}}$ is located at the $\alpha _{i}+1$ position.  Notice that $\Circ_{(d;\alpha_{0},\dotsc, \alpha_{r-1})}$ is nothing but the specialization of $\Circ_{d}$ to $\{x_{i}=0\mid i\notin\{\alpha_{0},\dotsc,\alpha_{r-1}\}\}$.
Let us denote by $D_{(d;\alpha_{0},\dotsc,\alpha_{r-1})}$ (resp.~$P_{(d;\alpha_{0},\dotsc,\alpha_{r-1})}$) the number of different monomials that appear with non-zero coefficient in the expansion of  the determinant $\det(\Circ_{(d;\alpha_{0},
\dotsc,\alpha_{r-1})})$ (resp.~permanent $\per(\Circ_{(d;\alpha_{0},\dotsc,\alpha_{r-1})})$).
We always have $D_{(d;\alpha_{0},\dotsc,\alpha_{r-1})}\le P_{(d;\alpha_{0},\dotsc,\alpha_{r-1})}$ and we  are lead to pose the following questions:

\begin{question}\label{question} Fix  integers $r\leq d$ and an $r-$tuple of  integers
$0\leq \alpha_{0}<\dotsb<\alpha_{r-1}\leq d-1$.
\begin{itemize}
\item[(1)] Is $D_{(d;\alpha_{0},\dotsc,\alpha_{r-1})}=P_{(d;\alpha_{0},\dotsc,\alpha_{r-1})}$?
\item[(2)]
More strongly, comparing coefficients,
is $$[\det(\Circ_{(d;\alpha_{0},
\dotsc,\alpha_{r-1})})]_{d-A_1-\dotsm -A_{r-1},A_1,\dotsc ,A_{r-1}}=$$ $$\pm  [\per(\Circ_{(d;\alpha_{0},
\dotsc,\alpha_{r-1})})]_{d-A_1-\dotsm -A_{r-1},A_1,\dotsc ,A_{r-1}}?$$
\end{itemize}
\end{question}

In this paper, we deal with 3-lines circulant matrices.
Without loss of generality we can always assume $\alpha _0=0$ and we set $a=\alpha _1$ and $b=\alpha _2$.
We answer both questions affirmatively under the condition $\GCD(a,b,d)=1$:

\begin{theorem}\label{mainTHM} Fix integers $d\ge 3$ and $1\le a<b\le d-1$. Assume $\GCD(a,b,d)=1$. Then, $[\det(\Circ_{(d;0,a,b))})]_{d-A-B,A,B}=\pm
[\per(\Circ_{(d;0,a,b))})]_{d-A-B,A,B}$. In particular, $D_{(d;0,a,b)}=P_{(d;0,a,b)}$.
\end{theorem}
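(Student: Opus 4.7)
The plan is to prove the stronger claim that every permutation $\sigma \in S_d$ contributing to the monomial $x^{d-A-B}y^A z^B$ in $\det(\Circ_{(d;0,a,b)})$ carries the same sign, which is then determined by $(A,B)$ alone. Since the coefficient in the permanent enumerates exactly the same permutations, this will yield $[\det]_{d-A-B,A,B} = \pm[\per]_{d-A-B,A,B}$ and in particular $D_{(d;0,a,b)}=P_{(d;0,a,b)}$.

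A contributing permutation is encoded by a map $\epsilon\colon \ZZ/d\ZZ \to \{0,a,b\}$ with $\sigma(i)=i+\epsilon(i)$, subject to $\sigma$ being bijective and $|\epsilon^{-1}(a)|=A$, $|\epsilon^{-1}(b)|=B$. Writing $\sgn(\sigma)=(-1)^{A+B-m(\sigma)}$ where $m(\sigma)$ is the number of non-trivial cycles of $\sigma$, the task reduces to proving that $m(\sigma) \bmod 2$ depends only on $(A,B)$.

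Each non-trivial cycle of $\sigma$ has $p$ $a$-steps and $q$ $b$-steps, with $p+q\geq 2$, $pa+qb\equiv 0\pmod d$, and visits $p+q$ distinct residues modulo $d$; summing over cycles gives $\sum p_i=A$, $\sum q_i=B$. The core of the argument is a structural lemma asserting that, under $\GCD(a,b,d)=1$, every contributing $\sigma$ has all its non-trivial cycles of one common primitive type $(p,q)$; indeed, I expect that these cycles will moreover be translates of one another in $\ZZ/d\ZZ$. This would determine $m = A/p = B/q$ uniquely from $(A,B)$ and pin down its parity.

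The main obstacle is establishing this structural lemma. The realizability condition for a single cycle --- its cyclic sequence of $a,b$-steps must have all partial sums distinct modulo $d$ --- is delicate, and $\GCD(a,b,d)=1$ is essential to rule out ``incompatible'' cycle types co-existing in a single permutation. My approach would be: first, characterize single cycles via the distinct partial-sum criterion and the $\GCD$ hypothesis; second, use the disjointness of cycles together with $\GCD(a,b,d)=1$ to force compatibility of cycle types across a multi-cycle permutation. The simpler case $a=1$ treated by Loehr, Warrington and Wilf exploits that $b$-steps behave as isolated jumps interrupting a $+1$-path, and does not directly generalize; the present setting calls for a more careful analysis of alternating $a,b$-patterns. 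Examples \ref{ex:gcd} and \ref{ex:r>3} show that both $\GCD(a,b,d)=1$ and the restriction to three lines are essential hypotheses, so any proof must leverage them.
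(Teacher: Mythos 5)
Your high-level plan is exactly the one the paper follows: reduce both coefficients to sums over the permutations $\sigma$ with $\sigma(i)-i\in\{0,a,b\}$ and prescribed step counts $(A,B)$, observe that $\sgn(\sigma)=(-1)^{A+B-m(\sigma)}$ with $m(\sigma)$ the number of non-trivial cycles, and show that $m(\sigma)$ is forced by $(A,B)$ because all non-trivial cycles must have one common primitive type $(p,q)$. However, you explicitly defer the structural lemma, and that lemma \emph{is} the theorem: everything before it is bookkeeping. The sketch you give for it (``characterize single cycles via the distinct partial-sum criterion, then use disjointness plus $\GCD(a,b,d)=1$ to force compatibility'') does not contain the ideas that make the argument work. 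The paper needs three genuinely non-obvious steps: (i) each individual cycle $C_i$, with $A_i$ steps of size $a$, $B_i$ steps of size $b$ and winding number $\ell_i=(aA_i+bB_i)/d$, satisfies $\GCD(A_i,B_i,\ell_i)=1$ --- proved by an averaging/intermediate-value argument on subwords of the cyclic step-word of length $(A_i+B_i)/g$; (ii) two disjoint cycles both having winding number $1$ must have equal $(A_i,B_i)$ --- proved by induction on cycle length, locating a configuration $i<j<C_1(i)$ with equal steps and contracting both cycles to a smaller modulus $d'=d-b$ (or $d-a$), with a separate argument when some $A_i$ or $B_i$ vanishes; (iii) the general case reduces to (ii) by first multiplying all residues by a unit $z'$ so that the new $a'$ divides $d$, and then lifting both cycles to modulus $d\LCM(\ell_1,\ell_2)$ by concatenating their step-words, which makes both winding numbers equal to $1$. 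None of this is routine, and the disjointness bookkeeping in the contraction step is delicate.

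Two further cautions. First, ``all cycles of $\sigma$ have a common type $(p,q)$'' does not by itself determine $m(\sigma)$ across different $\sigma$: a priori one $\sigma$ could split $(A,B)=(4,2)$ into two cycles of type $(2,1)$ and another into one cycle of type $(4,2)$, giving different parities. What rules this out is primitivity in the strong sense $\GCD(p,q,(pa+qb)/d)=1$ from step (i), which forces $m=\GCD\bigl(A,B,\tfrac{aA+bB}{d}\bigr)$ for every contributing $\sigma$. You use the word ``primitive'' but would need to prove this specific statement. Second, your expectation that the cycles are translates of one another in $\ZZ/d\ZZ$ is not established by the paper and is not needed; the internal order of $a$- and $b$-steps may differ between cycles, so I would not build the argument on that.
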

The case $a=1$ in the above theorem was established in \cite{LWW}, and our proof strategy largely follows theirs.
In fact, our proof gives the sign, as well as a combinatorial interpretation of the magnitude, for the coefficients of $\det(\Circ_{(d;0,a,b))})$:
\begin{theorem}\label{thm:coef}
Fix integers $d\ge 3$ and $1\le a<b\le d-1$ such that  $\GCD(a,b,d)=1$. Then, for any nonnegative integers $A,B$ such that $A+B\le d$,
\begin{itemize}
  
\item[(1)] $[\det(\Circ_{(d;0,a,b)})]_{d-A-B,A,B}\neq 0$ if and only if $d|(aA+bB)$.
\end{itemize}

Further, assuming $d|(aA+bB)$, then:
\begin{itemize}
\item[(2)] the sign of $[\det(\Circ_{(d;0,a,b)})]_{d-A-B,A,B}$ is $+$ if and only if at least one of $\GCD(A,B,\frac{aA+bB}{d})$ and $A+B-1$ is even; and
\item[(3)] the magnitude of $[\det(\Circ_{(d;0,a,b)})]_{d-A-B,A,B}$ equals the number of permutations in $S_d$ with cycle decomposition $C_1\circ C_2\circ\dotsm\circ C_k$, where $k=\GCD(A,B,\frac{aA+bB}{d})$, and each $C_i$ has length $A+B$ 
and consists of exactly $A$ elements $j$ with $C_i(j)\equiv j+a$ and $B$ elements $j$ with $C_i(j)\equiv j+b$, modulo $d$.
\end{itemize}
\end{theorem}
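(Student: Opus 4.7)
The plan is to interpret both $\det(\Circ_{(d;0,a,b)})$ and $\per(\Circ_{(d;0,a,b)})$ as (signed) enumerations over a single combinatorial set. Expanding by the Leibniz (respectively, permanent) formula, only those $\sigma\in S_d$ with $\sigma(i)-i\in\{0,a,b\}\pmod d$ for every $i$ can contribute; call these the \emph{valid} permutations, and for each pair $(A,B)$ let $\mathcal{P}(A,B)\subseteq S_d$ denote the set of valid $\sigma$ with exactly $A$ indices satisfying $\sigma(i)\equiv i+a$ and exactly $B$ satisfying $\sigma(i)\equiv i+b$. Each such $\sigma$ contributes $x^{d-A-B}y^{A}z^{B}$ to the permanent and $\sgn(\sigma)\,x^{d-A-B}y^{A}z^{B}$ to the determinant, so the three parts of the statement become structural assertions about $\mathcal{P}(A,B)$, and Theorem \ref{mainTHM} will follow at once from (2) together with (3).

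Necessity of $d\mid aA+bB$ in (1) is immediate from the telescoping identity $\sum_i(\sigma(i)-i)=0$ in $\mathbb{Z}$: lifting each step $\sigma(i)-i\pmod d$ to its representative in $\{0,a,b\}$ and summing shows $aA+bB$ is a nonnegative multiple $cd$ of $d$, with $c$ playing the role of total winding number. Sufficiency will drop out of the structural lemma described below together with an explicit construction of one element of $\mathcal{P}(A,B)$ under the divisibility and range conditions.

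The core of the proof, and what I expect to be the main obstacle, is the following structural lemma: under $\GCD(a,b,d)=1$, every $\sigma\in\mathcal{P}(A,B)$ has exactly $k=\GCD(A,B,c)$ non-trivial cycles, each of length $n=(A+B)/k$ and each containing $A/k$ $+a$-steps and $B/k$ $+b$-steps, together with $d-A-B$ fixed points. The strategy is cycle-by-cycle: each non-trivial cycle is a closed walk in the Cayley graph of $\mathbb{Z}/d\mathbb{Z}$ on $\{a,b\}$, so if it uses $A'$ $+a$-moves and $B'$ $+b$-moves then $aA'+bB'=c'd$ for a positive integer $c'$, and summing the triples $(A_i,B_i,c_i)$ over the non-trivial cycles recovers $(A,B,c)$. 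The content of the lemma is that these triples must coincide across all non-trivial cycles of a fixed $\sigma$. I would prove this by a rigidity/exchange argument: two non-trivial cycles of different types would, under $\GCD(a,b,d)=1$ (which guarantees that $a$ and $b$ jointly generate $\mathbb{Z}/d\mathbb{Z}$), force their supports to intersect, contradicting the disjointness of distinct cycles of a permutation; the primitive triple $(A/k,B/k,c/k)$ then emerges as the unique minimal realizable triple, and all non-trivial cycles must be copies of it.

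Granting the structural lemma, the remaining claims assemble routinely. Each $\sigma\in\mathcal{P}(A,B)$ has cycle type $((A+B)/k)^{k}\cdot 1^{d-A-B}$, hence the common sign $(-1)^{k((A+B)/k-1)}=(-1)^{A+B-k}$; this equals $+1$ exactly when $A+B\equiv k\pmod 2$, equivalently when either $k$ is even or $A+B$ is odd, which is (2). The magnitude in (3) is then $|\mathcal{P}(A,B)|$, and from the structural lemma this is precisely the number of ways to place $k$ disjoint cycles of length $(A+B)/k$ in $\mathbb{Z}/d\mathbb{Z}$ each with $A/k$ $+a$-steps and $B/k$ $+b$-steps, as stated; an explicit construction whenever $d\mid aA+bB$ and $A+B\le d$ settles the remaining direction of (1). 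Since the sign is constant across $\mathcal{P}(A,B)$, Theorem \ref{mainTHM} is immediate: $[\det]_{d-A-B,A,B}=\pm|\mathcal{P}(A,B)|=\pm[\per]_{d-A-B,A,B}$.
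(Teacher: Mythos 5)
Your overall architecture is exactly the paper's: reduce everything to the set $P_{a,b,d,A,B}$ of permutations with $\sigma(i)-i\in\{0,a,b\}\pmod d$, establish that all of them share the same cycle type $\bigl((A+B)/k\bigr)^k\cdot 1^{d-A-B}$ with $k=\GCD(A,B,\tfrac{aA+bB}{d})$, and read off the sign, the magnitude, and Theorem~\ref{mainTHM}. The necessity direction of (1) via telescoping and the sign computation $(-1)^{A+B-k}$ (using $k\mid A+B$ to match the paper's phrasing) are correct.

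The genuine gap is that your ``structural lemma'' is asserted, not proved, and it is the entire content of the theorem. You propose to show that two non-trivial cycles of different types must have intersecting supports by ``a rigidity/exchange argument,'' but this sentence only restates the goal. The claim is genuinely delicate: for $d=8$, $a=2$, $b=4$ the permutation $(0,2,4,6)\circ(1,5)$ consists of two disjoint valid cycles of different types, so any proof must use $\GCD(a,b,d)=1$ in an essential and non-obvious way, and your sketch gives no indication of where coprimality enters or why supports would be forced to intersect. The paper needs three substantial lemmas here: a subword/discrete-intermediate-value argument showing each cycle satisfies $\GCD(A_i,B_i,\ell_i)=1$ (which you also need, and do not address, to identify $k$ with $\GCD(A,B,\ell)$ rather than merely a common divisor); an induction on cycle length for the case of winding numbers $\ell_1=\ell_2=1$, carried out by a surgery that deletes one $a$- or $b$-step from each of two disjoint cycles and shrinks $d$ accordingly; and two normalization reductions (rescaling by a unit mod $d$ to force $a\mid d$, then lifting to modulus $d\cdot\LCM(\ell_1,\ell_2)$ to force winding number one). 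None of this is routine, and nothing in your proposal substitutes for it. A smaller but real omission: the sufficiency direction of (1) requires exhibiting at least one element of $\mathcal{P}(A,B)$ whenever $d\mid aA+bB$ and $A+B\le d$; you defer this to ``an explicit construction'' that is never given (the paper gets it from the known characterization of the permanent's support).
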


\begin{remark}\label{rem:2-lines}
For $r=2$ we may assume $\alpha_0=0$ and $\alpha_1=a$ divides $d$, with $x$  on the main diagonal and $y$ on another nontrivial diagonal of the circulant $d\times d$-matrix. One easily verifies that all coefficients of the determinant $\det(\Circ_{(d;0,a)})$ equal up to sign to the corresponding coefficients in the permanent $\per(\Circ_{(d;0,a)})$, and are given by the explicit formula
\begin{equation*}
\det(\Circ_{(d;0,a)})=
\sum_{s=0}^{a} (-1)^{(a-s)(\frac{d}{a}-1)} \binom{a}{s} x^{\frac{ds}{a}}y^{\frac{d(a-s)}{a}}.
\end{equation*}
\end{remark}
However, for $r=3$, the following example shows that the assumption $\GCD(a,b,d)=1$ cannot be dropped from Theorem \ref{mainTHM}:

\begin{example}\label{ex:gcd}  Indeed, we take $(a,b,d)=(2, 6, 12)$.
  We compute the permanent and the determinant of $\Circ_{(12;0,2,6)}$ and get:
\begin{multline*}
  \det(\Circ_{(12;0,2,6)})= x^{12}-6x^{10}y^2+15x^8y^4-20x^6y^6+15x^4y^8-6x^2y^{10}+\\
  +y^{12}-12x^8yz^3+32x^6y^3z^3-24x^4y^5z^3+4y^9z^3-2x^6z^6+42x^4y^2z^6+18x^2y^4z^6+\\
  +6y^6z^6+12x^2yz^9+4y^3z^9+z^{12}
\end{multline*}  
and
\begin{multline*}
  \per(\Circ_{(12;0,2,6)})=x^{12}+6x^{10}y^2+15x^8y^4+20x^6y^6+15x^4y^8+6x^2y^{10}+\\
  +y^{12}+12x^8yz^3+40x^6y^3z^3+48x^4y^5z^3+24x^2y^7z^3+4y^9z^3+2x^6z^6+42x^4y^2z^6+\\
  +30x^2y^4z^6+6y^6z^6+12x^2yz^9+4y^3z^9+z^{12}.
  \end{multline*}  
Therefore, we have $$D_{(12;0,2,6)}=18<P_{(12;0,2,6)}=19$$ and $$[\det(\Circ_{(12;0,2,6))})]_{6,3,3}=32 \ne   [\per(\Circ_{(12;0,2,6))})]_{6,3,3}=40.$$
\end{example}

\begin{example}\label{ex:r>3} For $r$-lines circulant  matrices with $r\ge 4$  Theorem~\ref{mainTHM} is no longer true. In fact,
\begin{enumerate}
\item  For $r=4$, we have  $D_{(6;0,2,4,5)}<P_{(6;0,2,4,5)}$ since the monomial $xz^2uv^2$
  appears in $\per(\Circ(x,0,z,0,u,v))$ but it does not appear in $\det(\Circ(x,0,z,0,u,v))$ (see \cite{CMMRS} and \cite[Example 3]{CR}).

\item Assume $r\ge 5$. We choose two prime integers $p$ and $q$ such that $p<q$ and $r\le pq$.
  Set $d=pq$. We will first prove that $D_{(d;0,1,\dotsc , d-1)}<P_{(d;0,1,\dotsc, d-1)}$.
  To this end we exhibit a $d$-tuple $A_0,A_1,\dotsc ,A_{d-1}$ such that
\begin{align*}
  A_0+2A_1+\dotsm +dA_{d-1}&\equiv 0  \pmod d\\
  A_0+A_1+\dotsm +A_{d-1}&=d
  \end{align*}
  and
\begin{equation*}
  [\det(\Circ_{(d;0,1,
    \dotsc,d-1)})]_{A_0,A_1,\dotsc ,A_{d-1}}=0.
  \end{equation*}

We apply Bezout's theorem and we write $\lambda q=1+\mu p$ with $1\le \lambda, \mu$ and $\lambda q< d$.
We define $A_0=d-\mu p-2$, $A_1=\mu p-1$, $A_{d-\mu p}=A_{\mu p -\mu \lambda +1}=A_{d-\mu p+\lambda \mu}=1$,
and $A_{i}=0$ for $i\ne 0,1, d-\mu p,\mu p -\mu \lambda +1,d-\mu p+\lambda \mu$.

By the proof of \cite[Theorem 3.5]{CMMRS}, $[\det(\Circ_{(d;0,1,
  \dotsc, d-1)})]_{A_0,A_1,\dotsc ,A_{d-1}}=0$, i.e.~the monomial
$x_0^{d-\mu p-2}x_1^{\mu p -1}x_{d-\mu p}x_{\mu p- \lambda \mu +1}x_{d-\mu p+\lambda \mu}$ appears in $\per(\Circ\xd)$ but it
does not appear in $\det(\Circ\xd)$. Therefore, $D_{(d;0,1,\dotsc, d-1)}<P_{(d;0,1,\dotsc, d-1)}$.

Since $5\le r\le d$, for any choice of an $r$-tuple $(a_0,a_1,\dotsc ,a_{r-1})$ containing
$\{0,1, d-\mu p,\mu p -\mu \lambda +1,d-\mu p+\lambda \mu \}$, the  monomial
$x_0^{d-\mu p-2}x_1^{\mu p -1}x_{d-\mu p}x_{\mu p- \lambda \mu +1}x_{d-\mu p+\lambda \mu}$ appears in the permanent of
the $d\times d$ $r$-lines circulant matrix $\Circ_{(d;a_0,a_1,\dotsc , a_{r-1})}$ but it does not appear in the determinant.
Therefore, $D_{(d;a_0,a_1,\dotsc ,a_{r-1})}<P_{(d;a_0, a_1,\dotsc ,a_{r-1})}$ and we are done.
\end{enumerate}
\end{example}

\subsection{Notation}
For a permutation $\sigma$ of $d$ elements $I:=\{0,\dotsc,d-1\}$ and an integer $q$,
we define
\begin{equation*}
  S_{q,\sigma}:=\{i\in I\mid\sigma(i)\equiv i+q \pmod d\}.
\end{equation*}
  Let:
\begin{equation*}
  P_{a,b,d,A,B}:=\{\sigma\in S_d\mid  \abs{S_{a,\sigma}}=A, \abs{S_{b,\sigma}}=B, \abs{S_{0,\sigma}}=d-A-B\}.
  \end{equation*}
  The permutations in $P_{a,b,d,A,B}$ can be characterised as those in $S_d$ where each $i$ is either fixed, i.e.~$\sigma(i)=i$,
  or translated $a$ or $b$ steps forward, i.e.~$\sigma(i)\equiv i+a$ or $\sigma(i)\equiv i+b\pmod{d}$,
  and further, the second situation $\sigma(i)\equiv i+a$ happens exactly $A$ times and the last one exactly $B$ times. Clearly,

\begin{lemma}\label{lem:per,det}
  The following equalities hold:
  \begin{align*}
[\det(\Circ_{(d;0,a,b)})]_{d-A-B,A,B}&=\sum_{\sigma\in P_{a,b,d,A,B}}\sgn(\sigma),\\
[\per(\Circ_{(d;0,a,b)})]_{d-A-B,A,B}&=\sum_{\sigma\in P_{a,b,d,A,B}}\abs{\sgn(\sigma)}=\abs{P_{a,b,d,A,B}}.
\end{align*}\qed
    \end{lemma}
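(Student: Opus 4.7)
The plan is to obtain both identities directly from the Leibniz formula, since Lemma~\ref{lem:per,det} is essentially a dictionary between terms in the Leibniz expansion and elements of $P_{a,b,d,A,B}$.

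First, I would record the precise shape of the matrix $M := \Circ_{(d;0,a,b)}$: its $(i,j)$-entry equals $x$ if $j-i \equiv 0 \pmod d$, equals $y$ if $j-i \equiv a \pmod d$, equals $z$ if $j-i \equiv b \pmod d$, and is $0$ otherwise (here I am using $x=x_0$, $y=x_a$, $z=x_b$). Then I would write the Leibniz expansion
\begin{equation*}
\det(M)=\sum_{\sigma\in S_d}\sgn(\sigma)\prod_{i=0}^{d-1}M_{i,\sigma(i)},\qquad \per(M)=\sum_{\sigma\in S_d}\prod_{i=0}^{d-1}M_{i,\sigma(i)}.
\end{equation*}

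The key observation is that $\prod_{i}M_{i,\sigma(i)}$ vanishes unless $\sigma(i)-i\in\{0,a,b\}\pmod d$ for every $i$, and in that case the product equals $x^{|S_{0,\sigma}|}y^{|S_{a,\sigma}|}z^{|S_{b,\sigma}|}$ by the very definitions of $S_{q,\sigma}$ given just above the lemma. Thus, picking out the coefficient of $x^{d-A-B}y^A z^B$ amounts to restricting the sum to those $\sigma$ with $|S_{0,\sigma}|=d-A-B$, $|S_{a,\sigma}|=A$, $|S_{b,\sigma}|=B$, which is exactly the set $P_{a,b,d,A,B}$.

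Plugging this into the two Leibniz formulas yields
\begin{equation*}
[\det(\Circ_{(d;0,a,b)})]_{d-A-B,A,B}=\sum_{\sigma\in P_{a,b,d,A,B}}\sgn(\sigma),
\end{equation*}
and similarly the formula for the permanent, where each $\sgn(\sigma)$ is replaced by $1=|\sgn(\sigma)|$, so the sum just counts $|P_{a,b,d,A,B}|$. There is no real obstacle here; the only thing to be careful about is that the three exponents $\{0,a,b\}\pmod d$ are distinct (which holds since $0\le a<b\le d-1$), so the three monomial types $x$, $y$, $z$ are correctly separated in the factorization of $\prod_i M_{i,\sigma(i)}$, and the translation between ``fixed / shifted by $a$ / shifted by $b$'' and ``power of $x$ / of $y$ / of $z$'' is unambiguous.
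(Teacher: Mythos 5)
Your proof is correct and is exactly the argument the paper has in mind: the paper states this lemma with no proof beyond the word ``Clearly'' (the characterisation of $P_{a,b,d,A,B}$ preceding it is the whole justification), and your Leibniz-expansion argument, including the check that $0$, $a$, $b$ are distinct modulo $d$, fills in precisely those omitted routine details.
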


    We often work with cyclic indices, say modulo $d$: an element in  $\ZZ/d\ZZ$ is uniquely determined by an element in
    $s\in I=\{0,\dotsc,d-1\}$; by abuse of notation, we frequently identify in what follows this integer $s$ with its class in  $\ZZ/d\ZZ$.
    With this identification, we will write  $s_1<s_2<s_3<\dotsb$, with $s_i\in \ZZ/d\ZZ$ if, for the
    corresponding elements in $I$ we have $0<s_2-s_1<s_3-s_1<\dotsm$.

\begin{example}
We have $4<1<2$ modulo $5$, as $0<2<3$. On the other hand it is not true that $1<3<2$.
\end{example}
\subsection{Structure of permutations}\label{structure}
In this subsection we fix a permutation $\sigma\in P_{a,b,d,A,B}$ and its canonical cycle decomposition
$\sigma=C_1\circ C_2\circ \dotsm\circ C_k$. We set $A_i=\abs{S_{a,C_i}}$, $B_i=\abs{S_{b,C_i}}$ and ``winding number'' $\ell_i=\frac{A_ia+B_ib}{d}$.
 Our aim is to prove, in steps, that the canonical cycle decomposition
$\sigma=C_1\circ C_2\circ \dotsm\circ C_k$ must be of a very special type, as described in Theorem~\ref{thm:coef}(3), and in particular all permutations $\sigma\in P_{a,b,d,A,B}$ have the same cycle structure, hence the same sign, implying Theorem~\ref{mainTHM}.

The following lemma is a straightforward generalization of \cite[Lemma 7]{LWW}, where the case $a=1$ was considered. We include the proof for the sake of completeness.
\begin{lemma}\label{lem:GCD=1}
For any integer $i$, $1\le i \le k$, we have
$\GCD(A_i,B_i,\ell_i)=1$.
\end{lemma}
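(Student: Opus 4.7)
The plan is to argue by contradiction. Assume $g := \GCD(A_i, B_i, \ell_i) \ge 2$, let $m := A_i + B_i$ be the length of the cycle $C_i$, and write $A_i = g A'$, $B_i = g B'$, $\ell_i = g L$, so that $A' a + B' b = L d \equiv 0 \pmod{d}$. My goal is to exhibit a proper contiguous sub-walk of $C_i$ of length $m/g$ in which exactly $A'$ of the steps are $+a$ and exactly $B'$ are $+b$. The displacement along such a sub-walk would equal $A' a + B' b$, which vanishes modulo $d$, so the sub-walk would return to its starting point, contradicting the fact that $C_i$ is a single cycle of length $m > m/g$.

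To locate such a sub-walk, parametrize $C_i$ as $j_0, j_1, \ldots, j_{m-1}, j_m = j_0$ with $j_{s+1} - j_s \in \{a, b\}$, let $\alpha(s)$ denote the number of $+a$-transitions among the first $s$ steps, and study the sliding-window count $f(s) := \alpha(s + m/g) - \alpha(s)$ for $s = 0, 1, \ldots, m - m/g$. Two elementary observations drive the argument. First, $f$ is $1$-Lipschitz in the discrete sense, i.e.\ $|f(s+1) - f(s)| \le 1$, because advancing $s$ by one removes a single transition (worth $0$ or $1$) from the left end of the window and adds another at the right end. Second, evaluating $f$ at the $g$ anchor positions $s = 0, m/g, 2m/g, \ldots, (g-1) m/g$ and summing telescopes to $\alpha(m) = A_i = g A'$, so the average of these $g$ anchor values is precisely $A'$.

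By the averaging, either every anchor value equals $A'$ (in which case we may take $s^{*} = 0$), or some anchor value exceeds $A'$ while another lies below it; in the latter case the $1$-Lipschitz property supplies, via a discrete intermediate value argument, some $s^{*} \in \{0, \ldots, m - m/g\}$ with $f(s^{*}) = A'$. In either outcome, the length-$(m/g)$ window starting at $s^{*}$ contains exactly $A'$ $+a$-steps and $B'$ $+b$-steps, so its displacement is $A' a + B' b \equiv 0 \pmod{d}$, yielding $j_{s^{*} + m/g} \equiv j_{s^{*}} \pmod{d}$. Since $g \ge 2$ forces $1 \le m/g < m$, the two indices name distinct positions in the listing $j_0, \ldots, j_{m-1}$ of vertices of $C_i$ (with the boundary case $s^{*} + m/g = m$ reducing to a collision $j_{s^{*}} = j_0$ for $s^{*} \ge 1$), contradicting the distinctness of the vertices of $C_i$.

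The main point of care is the discrete intermediate value step: one must check that the $1$-Lipschitz property of $f$ truly forces it to attain every integer value between its extremal anchor values, and handle cleanly the boundary case $s^{*} + m/g = m$ so that a genuine collision within $C_i$ is produced. Once these are in place, the argument is a direct extension of the $a = 1$ case treated in \cite{LWW}.
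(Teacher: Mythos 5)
Your proposal is correct and follows essentially the same route as the paper: both locate, via a sliding-window count and a discrete intermediate value argument (the paper slides over the concatenation of two consecutive length-$w/g$ blocks, you slide between anchor positions whose values average to $A'$), a contiguous block of $m/g$ steps containing exactly $A_i/g$ steps of size $a$ and $B_i/g$ of size $b$, whose total displacement is $\equiv 0 \pmod d$ and hence forces a repeated entry in the cycle unless $g=1$. The differences are purely presentational.
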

\begin{proof}
Let $g:=\GCD(A_i,B_i,\ell_i)$. We consider the cycle $C_i=(s_1,\dotsc,s_w)$, where $w=A_i+B_i$ and $s_j\in \ZZ/d\ZZ$. Let $w=gw'$. To simplify notation we assume that the indexes $j$ of each $s_j$ are considered modulo $w$. We know that $s_{j+1}-s_{j}$ is congruent either to $a$ or $b$. 
 Hence, we may represent $C_i$ as a word $W$ of length $w$ with letters $a,b$:
$$W:=s_2-s_1,s_3-s_2,\dotsc,s_w-s_{w-1},s_1-s_w=:p_1\dotsb p_w.$$

\textbf{Claim:} There exists a subword $W'=p_{w_0+1},p_{w_0+2},\dotsc,p_{w_0+w'}$ of the word $W$ such that:
\begin{enumerate}
\item the number of $a$'s in $W'$ equals $A_i/g$ and
\item the number of $b$'s in $W'$ equals $B_i/g$.
\end{enumerate}
\begin{proof}[Proof of the claim:]
As the length of the word $W'$ is fixed to be $w'$, the number of letters $b$ is determined by the number of letters $a$ in it. Hence, it is enough to find $W'$ that satisfies the first condition.

The word $W$ is a concatenation of $g$ words of length $w/g$:
$$W=W_1\dotsb W_g.$$
If one of the $W_j$'s has $A_i/g$ letters $a$ the claim follows. Hence, we assume each of them has either strictly more or strictly less letters $a$ then $A_i/g$. As the total number of letters $a$ in $W$ equals $A_i$, it is not possible that all words $W_j$ have simultaneously more or simultaneously less letters $a$ then $A_i/g$. Thus, we may find two consecutive words $W_j$, $W_{j+1}$ and assume without loss of generality that $W_j$ has less and $W_{j+1}$ has more than $A_i/g$ letters $a$. Consider the sequence of all subwords of $W_jW_{j+1}$ of length $w'$ ordering them by the starting index:
$$W_j=W'_1,W'_2,\dotsc,W'_{w'+1}=W_{j+1}.$$
The word $W'_{s+1}$ is shifted by one index to the right, with respect to the word $W'_s$. In particular, $W'_{s+1}$ is getting exactly one additional letter and looses exactly one letter. 
Hence, the number of letters $a$ in $W'_s$ and $W'_{s+1}$ may differ by at most one. In particular, as the starting word $W_j=W'_1$ has less than $A_i/g$ letters $a$ and the last word $W_{j+1}=W'_{w'+1}$ has more than $A_i/g$ letters $a$, there must exist a word $W'=W'_s$ with precisely $A_i/g$ letters $a$.
\end{proof}
We may cyclically permute the entries \ $(s_1,\dotsc,s_w)$ of $C_i$. 
 By the Claim we may assume that in the multiset $\{s_2-s_1,s_3-s_2,\dotsc, s_{w'}-s_{w'-1},s_{w'+1}-s_{w'}\}$ there are precisely $A_i/g$ differences $a$ and $B_i/g$ differences $b$. Summing up all the elements of this multiset modulo $d$ we obtain:
$$s_{w'+1}-s_1=a\frac{A_i}{g}+b\frac{B_i}{g}=\frac{aA_i+bB_i}{g}=\frac{\ell_id}{g}=\frac{\ell_i}{g}d\equiv 0 \pmod d.$$
Hence, $s_{w'+1}\equiv s_1$. As the cycle $C_i$ was assumed to be primitive, the $s_j$'s must be all distinct. Hence, $w'+1=1$ modulo $w$, which means that $w'=w$ and $g=1$.
\end{proof}
\begin{lemma}\label{lem:winding1}
Suppose $\ell_1=\ell_2=1$. Then $A_1=A_2$ and $B_1=B_2$.
\end{lemma}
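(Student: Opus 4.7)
The plan is to argue by contradiction: assume $(A_1, B_1) \ne (A_2, B_2)$ and exhibit an element of $C_1 \cap C_2$, contradicting the fact that these are distinct cycles of $\sigma$. From $\ell_1 = \ell_2 = 1$ we get $A_i a + B_i b = d$ for $i=1,2$, hence $(A_1 - A_2)a = (B_2 - B_1)b$. Since Theorem~\ref{mainTHM} assumes $\GCD(a,b,d)=1$ and $A_i a + B_i b = d$ forces $\GCD(a,b)$ to divide $d$, we conclude $\GCD(a,b) = 1$. Consequently $A_1 - A_2 = tb$ and $B_2 - B_1 = ta$ for some integer $t$; after possibly swapping indices, we may take $t \ge 1$.

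Since $\ell_i = 1$, choosing $s_0^{(i)} \in C_i$ and lifting the cyclic order to $\mathbb{Z}$ yields a strictly increasing sequence $s_0^{(i)} < s_1^{(i)} < \dotsb < s_{w_i}^{(i)} = s_0^{(i)} + d$ with consecutive differences in $\{a,b\}$, and $C_i = \{s_j^{(i)} \bmod d : 0 \le j < w_i\}$. Setting $\phi(j) := s_j^{(1)} - s_0^{(1)}$ and $\psi(k) := s_k^{(2)} - s_0^{(2)}$, the cycles $C_1, C_2$ are disjoint in $\mathbb{Z}/d\mathbb{Z}$ if and only if $\Delta := s_0^{(2)} - s_0^{(1)}$ lies outside the difference set
\[
D := \{\phi(j) - \psi(k) \bmod d : 0 \le j < w_1,\ 0 \le k < w_2\}.
\]
It therefore suffices to show $D = \mathbb{Z}/d\mathbb{Z}$, which rules out any admissible $\Delta$.

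The combinatorial heart is precisely this equality. Each partial sum factors as $\phi(j) = \alpha_j a + \beta_j b$, where $(\alpha_j, \beta_j)$ traces a monotone staircase path in $\mathbb{Z}^2_{\ge 0}$ from $(0,0)$ to $(A_1, B_1)$ (and symmetrically for $\psi$). Hence an element of $D$ has the form $Ma + Nb \pmod d$ with $(M, N) = (\alpha_j - \alpha_k',\, \beta_j - \beta_k')$. Because $\GCD(a,b) = 1$, Bezout's identity ensures that the map $(m, n) \mapsto ma + nb \bmod d$ is surjective from any transversal of the form $[0, b-1] \times [0, a-1]$ onto $\mathbb{Z}/d\mathbb{Z}$; the inequalities $A_1 + A_2 \ge b$ and $B_1 + B_2 \ge a$ (consequences of $t \ge 1$) guarantee that this transversal sits inside the bounding rectangle $[-A_2, A_1] \times [-B_2, B_1]$. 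One then invokes a sub-word / path-rerouting argument, in the spirit of the Claim in the proof of Lemma~\ref{lem:GCD=1}, to show that every lattice pair $(M, N)$ in this rectangle is actually realised as some $(\alpha_j - \alpha_k',\, \beta_j - \beta_k')$ on the two \emph{fixed} staircases.

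The main obstacle is this last realisation step: translating from ``$(M, N)$ lies in the abstract Minkowski difference of the bounding rectangles'' to ``$(M, N)$ is the difference of vertices actually visited by the two specific paths.'' I expect this to require a careful analysis exploiting the monotonicity of the staircases together with a sliding-window argument analogous to the one used to locate the subword $W'$ in Lemma~\ref{lem:GCD=1}. Once established, $\Delta \in D = \mathbb{Z}/d\mathbb{Z}$ produces the desired contradiction, forcing $(A_1, B_1) = (A_2, B_2)$ and hence $A_1 = A_2$, $B_1 = B_2$.
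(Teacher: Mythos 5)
Your reformulation is sound as far as it goes: disjointness of $C_1$ and $C_2$ is indeed equivalent to $\Delta\notin D$, and the deductions $\GCD(a,b)=1$, $A_1-A_2=tb$, $B_2-B_1=ta$ are correct. However, the arithmetic step on which you hang the conclusion $D=\ZZ/d\ZZ$ is false. Since $t\ge 1$ gives $A_1=A_2+tb\ge b$, we get $d=aA_1+bB_1\ge ab+bB_1\ge ab$, with equality only in the degenerate case $A_2=B_1=0$. The transversal $[0,b-1]\times[0,a-1]$ therefore has only $ab\le d$ elements, so its image under $(m,n)\mapsto ma+nb \pmod d$ has at most $ab$ elements and cannot be surjective onto $\ZZ/d\ZZ$ unless $ab=d$: that box is a transversal for $\ZZ/ab\ZZ$, not for $\ZZ/d\ZZ$. (There is also a smaller slip: $[0,a-1]\subseteq[-B_2,B_1]$ requires $B_1\ge a-1$, which fails e.g.\ when $B_1=0$; a translated box would be needed.) Finally, the ``realisation step'' you flag as the remaining obstacle is not a technicality to be outsourced: since the kernel lattice $\{(m,n): ma+nb\equiv 0 \pmod d\}$ has index $d$ in $\ZZ^2$, covering $\ZZ/d\ZZ$ requires at least $d$ realised differences of staircase vertices lying in distinct cosets, and producing these is essentially the full content of the lemma. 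So the proposal correctly reduces the lemma to the covering statement $D=\ZZ/d\ZZ$ but does not prove it, and the counting argument offered in its place is quantitatively impossible.

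For contrast, the paper's proof sidesteps the covering question. It first handles the case where one of $A_1,A_2,B_1,B_2$ vanishes by a direct residue computation (that cycle then fills an entire congruence class modulo $a$, which the other cycle is forced to meet). In the general case it inducts on the length of $C_1$: it locates in the two cycles a pair of interleaved steps of the same size ($a$ or $b$), deletes one such step from each cycle, and passes to $d'=d-a$ or $d'=d-b$; the contracted cycles remain disjoint and $\GCD(a,b,d')=1$ is preserved, so induction applies. If you wish to pursue your route, you would need a genuine structural argument (not a cardinality count) showing that the two specific monotone staircases realise a full set of residues, which appears to be no easier than the contraction argument.
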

\begin{proof}
\textbf{First} we treat the special case when one of the numbers $A_1,A_2,B_1,B_2$ equals $0$, say $B_1=0$. As $\ell_1=1$ we have $d=a\cdot A_1$ and without loss of generality we may assume that $C_1$ consists of all numbers divisible by $a$.
If $B_2=0$ we are done. We assume $B_2\neq 0$ in order to obtain a contradiction. Let $C_2=(s_1,\dotsc,s_w)$ for $w=A_2+B_2$. As $C_1$ and $C_2$ are disjoint, we know that each $s_i$ is not divisible by $a$. 
As $a|d$ and $\GCD(a,b,d)=1$ we have $\GCD(a,b)=1$. Let $0\leq t<a$ be such that $tb\equiv -s_1$ modulo $a$. 
We have $A_2a+B_2b=d$ thus $a|B_2b$ and hence $a|B_2$. In particular $B_2\geq a>t$. Hence, there exists such $s_{i_0}$ that
$$\abs{\{1\leq i<i_0:s_{i+1}=s_i+b\}}=t,$$
i.e.~until $i_0$, the cycle $C_2$ made exactly $t$ jumps of size $b$. Modulo $a$ we have:
$$s_{i_0}=s_1+tb=s_1-s_1=0,$$
which gives the contradiction.

\textbf{Hence,} from now on we assume that all $A_1,A_2,B_1,B_2$ are nonzero. Without loss of generality we may assume $A_1+B_1\leq A_2+B_2$. Our proof is inductive on the length of the cycle $C_1$, i.e.~on $A_1+B_1$.

{\emph{Base of induction:}} $A_1+B_1=2$. As $A_1,B_1\neq 0$ we have $A_1=B_1=1$. Thus $d=a+b$. We also have $A_2a+B_2b=d=a+b$. As $A_2,B_2\neq 0$ we must have $A_2=B_2=1=A_1=B_1$.

{\emph{Inductive step:}}
We start by proving the following statement illustrated on Figure~\ref{fig:claim}.
\begin{figure}
\begin{center}
\includegraphics[scale=0.6]{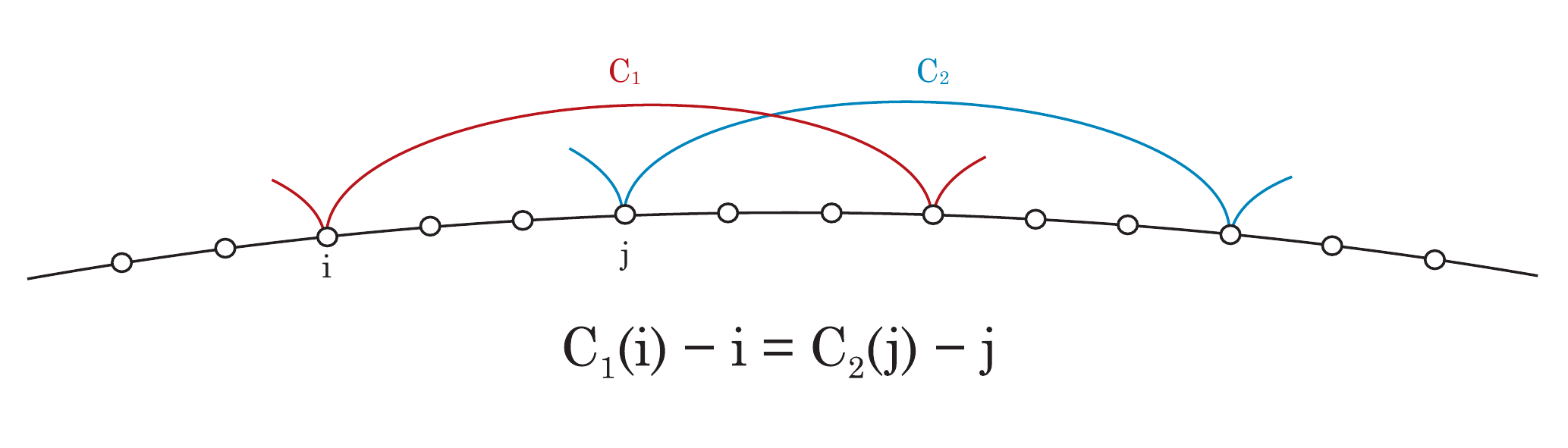}
\caption{Two cycles $C_1$ and $C_2$ with $i<j<C_1(i)$ as in the Claim in proof of Lemma~\ref{lem:winding1}.}\label{fig:claim}
\end{center}
\end{figure}
\textbf{Claim:} There exist $i,j$ such that:
\begin{itemize}
\item $C_1(i)-i=C_2(j)-j\neq 0,$
\item $i< j< C_1(i)$ or $j<i<C_2(j)$ with a cyclic ordering modulo $d$.
\end{itemize}

\begin{proof}[Proof of the claim:]
We know that $A_1,B_1\neq 0$ thus without loss of generality we may consider $i'$ such that $C_1(i')=i'+a$ and $C_1(i'+a)=i'+a+b$. Presenting $C_2=(s_1,\dotsc,s_w)$ for $w=A_2+B_2$, we may find $s_{j'}<i'+a<s_{j'+1}$.

If $s_{j'+1}-s_{j'}=a$, then setting $i=i'$ and $j=s_{j'}$ we obtain the desired indices.

If $s_{j'+1}-s_{j'}=b$, then we set $j=s_{j'}$ and $i=i'+a$ to conclude.
\end{proof}
By interchanging $C_1$ and $C_2$ and $a$ and $b$ if needed, by the above Claim we may assume that there are indices $i,j$ such that $C_1(i)-i=C_2(j)-j=b$ and
 $i< j< C_1(i)$, cf.~Figure~\ref{fig:claim}. As $\ell_1=1$, for $i<k<j$ we have $C_1(k)=k$.

\textbf{Next}, we show how to conclude in a special \textbf{Case I}, depicted on Figure~\ref{fig:c1} when there exist $i,j$ such that:
\begin{itemize}
\item for all $i<k<j$ we have $C_2(k)=k$; in other words $k$ does not belong to any of the two cycles.
\end{itemize}
\begin{figure}
\begin{center}
\includegraphics[scale=0.6]{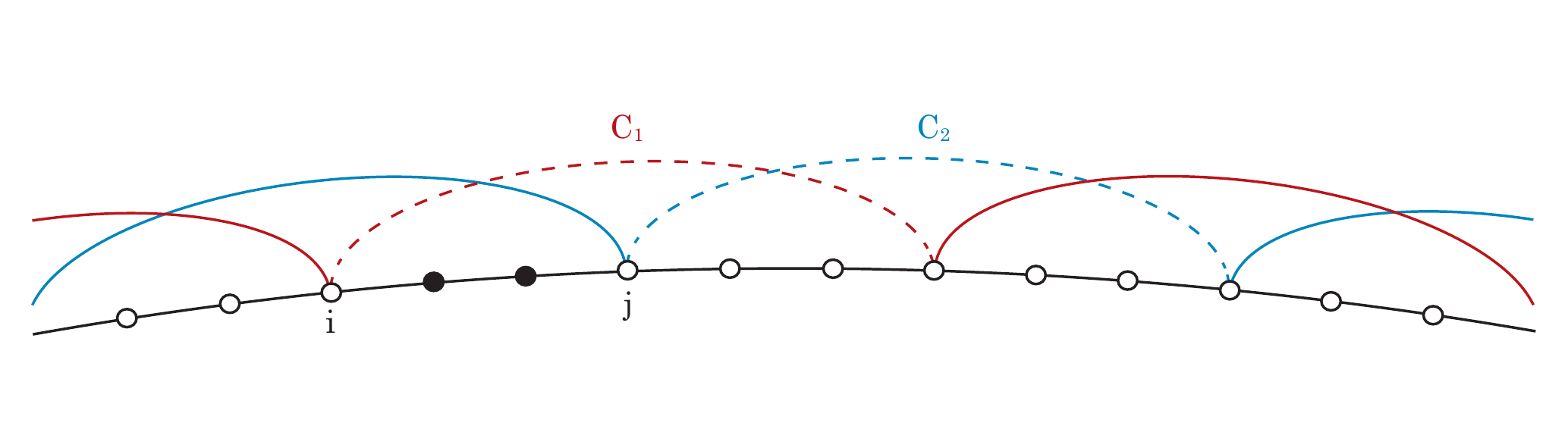}
\caption{Two cycles $C_1$ and $C_2$ with $i<j<C_1(i)$ as in Case I in proof of Lemma~\ref{lem:winding1}. The black dots do not belong to either cycle. The dashed lines indicate the parts of the cycles that will be contracted in the inductive proof.}\label{fig:c1}
\end{center}
\end{figure}


Let $d'=d-b$. We will define two disjoint cycles $C_1'$ and $C_2'$ that will be permutations of $\{1,2,\ldots,d'\}$. The cycle $C_i'$ will have exactly $A_i$ steps of size $a$ and $B_i-1$ steps of size $b$. As $\GCD(d',a,b)=\GCD(d,a,b)=1$ this will allow us to conclude by induction. Let $C_2=(s_1,\dotsc, j,j+b,\dotsc,s_w)$ for $w=A_2+B_2$ be presented in such a way that $s_1<s_2\dotsb<s_w$. We define $C_2'$ by removing $j+b$ and decreasing all indices after it by $b$: $C_2'=(s_1,\dotsc,j,\dotsc,s_w-b)$. Similarly, we present $C_1$ as $(s_1',\dotsc,i,i+b,\dotsc s'_{w'})$ for $w'=A_1+B_1$. The cycle $C_1'$ removes $i+b$ and decreases all indices after it by $b$: $C_1'=(s_1',\dotsc,i,\dotsc, s'_{w'}-b)$. The only nontrivial claim is that $C_1'$ and $C_2'$ are disjoint. Consider $C_1'$. The indices $s_1',\dotsc, i$ are distinct from $s_1,\dotsc,j$ as $C_1$ and $C_2$ were disjoint. Also $s_1',\dotsc, i$ are distinct from $s_\ell-b>j$ for $s_\ell>j$, as $i<j$. It remains to consider elements of $C_1'$ that are larger than $i$. Consider such an element $s_\ell'-b>i$. By assumption the index preceding $j$ in $C_1$ is smaller than $i$. Thus $s_\ell'-b$ could only coincide with the second part of elements of $C_2$: $j=j+b-b,\dotsc, s_w-b$. However, $s_\ell'-b= s_q-b$ would imply that $s_\ell'=s_q$, which is not possible. This finishes the proof in the special Case I.
\begin{figure}
\begin{center}
\includegraphics[scale=0.6]{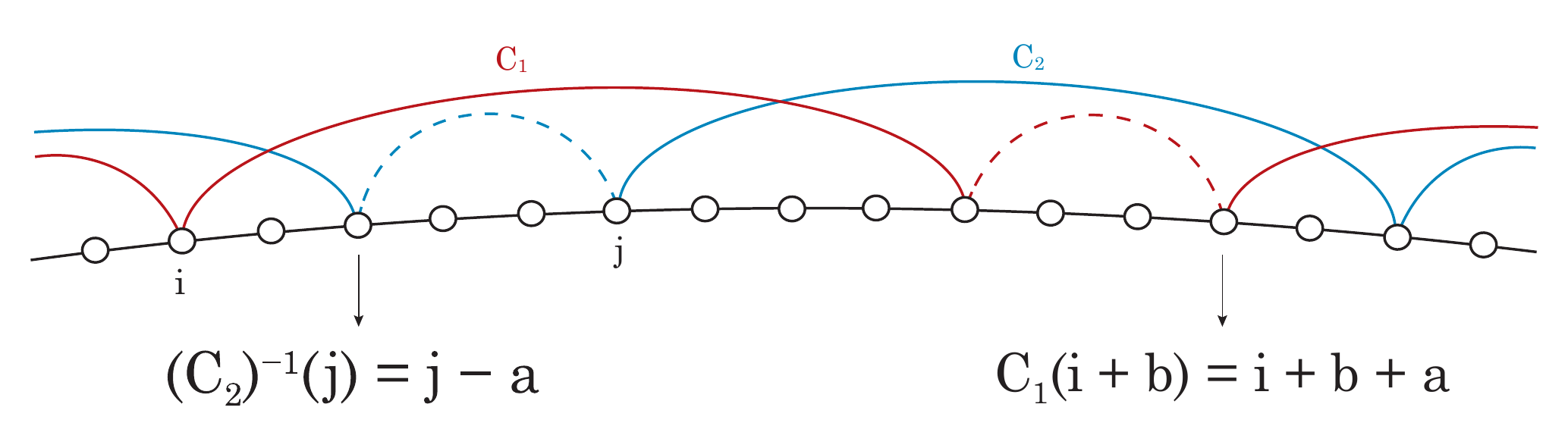}
\caption{Two cycles $C_1$ and $C_2$ with $i<j<C_1(i)$ as in Case II in proof of Lemma~\ref{lem:winding1}. The dashed lines indicate the parts of the cycles that will be contracted in the inductive proof.}\label{fig:c2}
\end{center}
\end{figure}

\textbf{Case II}:
If we are not in the special Case I, we must have  $i<(C_2)^{-1}(j)<j$, cf.~Figure~\ref{fig:c2}. This implies $j-(C_2)^{-1}(j)=a$ and $a<b$. By inverting the direction of both cycles we see that we may also assume that $C_1(i+b)=i+b+a$. In analogy to Case I, we set $d'=d-a$ and we will define two cycles $C_1'$ and $C_2'$. Precisely if $C_2=(s_1,\dotsc,j-a,j,j+b,\dotsc, s_w)$ let $C_2'=(s_1,\dotsc,j-a,j+b-a,\dotsc, s_w-a)$,  and if $C_1=(s_1',\dotsc,\textbf{i},i+b,i+b+a,\dotsc,s_{w'})$ let $C_1'=(s_1',\dotsc,i,i+b,\dotsc,s_{w'}-a)$. It is straightforward to check, as in the Case I, that the cycles $C_1'$ and $C_2'$ are disjoint. We conclude by induction.
\end{proof}
\begin{remark}
Lemma \ref{lem:winding1} may not hold when $\GCD(a,b,d)\neq 1$. Indeed, consider $d=8$, $a=2$, $b=4$. Let $\sigma=(0,2,4,6)\circ (1,5)$. Clearly the two cycles in the decomposition do not satisfy the conclusion of the lemma.
\end{remark}
\begin{lemma}\label{lem:relOFcycles}
For any $1\leq i,j\leq k$ we have $A_i=A_j$ and $B_i=B_j$.
\end{lemma}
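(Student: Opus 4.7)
My plan is to extend the surgery argument of Lemma~\ref{lem:winding1} from the winding-$1$ setting to cycles of arbitrary winding number. By transitivity, it suffices to show $A_i = A_j$ and $B_i = B_j$ for any two cycles $C_i, C_j$ in the decomposition of $\sigma$; the statement for $k$ cycles follows by applying this to pairs.

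I would proceed by induction on $d$ (equivalently on $\sum_\ell w_\ell$, where $w_\ell = A_\ell + B_\ell$). The base case $\ell_i = \ell_j = 1$ is precisely Lemma~\ref{lem:winding1}, so the real content is to produce a reduction that shrinks the problem. In analogy with the Claim inside the proof of Lemma~\ref{lem:winding1}, I would locate a pair of indices $p \in C_i$ and $q \in C_j$ with $C_i(p) - p = C_j(q) - q$ (say both equal to $b$) and $p < q < C_i(p)$ in the cyclic order modulo $d$. The existence of such a pair should follow from a pigeonhole argument: $\GCD(a,b,d) = 1$ forces each cycle to be ``spread out'' enough across $\ZZ/d\ZZ$, while Lemma~\ref{lem:GCD=1} (giving $\GCD(A_\ell, B_\ell, \ell_\ell) = 1$ for each cycle) prevents degenerate arrangements.

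Having produced such a witness pair $(p,q)$, I would perform the surgery of ``Case I'' or ``Case II'' of Lemma~\ref{lem:winding1}: contract the $b$-jumps at $p$ and $q$ to obtain a permutation $\sigma'$ on $\{0, \dotsc, d-b-1\}$ whose two modified cycles carry parameters $(A_i, B_i - 1)$ and $(A_j, B_j - 1)$, respectively, while preserving the condition $\GCD(a,b,d-b) = \GCD(a,b,d) = 1$. Applying the inductive hypothesis to $\sigma'$ yields $A_i = A_j$ and $B_i - 1 = B_j - 1$, which gives the conclusion.

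The main obstacle, absent from the proof of Lemma~\ref{lem:winding1}, is that when $\ell_i > 1$ the arc $(p, C_i(p))$ of $\ZZ/d\ZZ$ may contain other elements of $C_i$ itself, so the naive surgery would corrupt the cycle decomposition. I anticipate handling this in one of two ways: either choose the witness pair $(p,q)$ to minimize the length of the arc $(p, C_i(p))$, so that no internal collisions occur (using Lemma~\ref{lem:GCD=1} to guarantee that such a minimal witness exists), or perform a simultaneous multi-surgery that absorbs all intervening elements of $C_i$ at once. Making this bypass work uniformly when high winding numbers meet nontrivial $k \geq 2$ is, I expect, the crux of the proof.
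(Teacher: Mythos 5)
Your plan has a genuine gap exactly where you locate it, and the obstruction is worse than you suggest: the contraction surgery of Lemma~\ref{lem:winding1} does not merely risk ``internal collisions'' when a winding number exceeds $1$ --- it breaks down structurally. Viewing a cycle $C_i$ as a closed forward walk on $\ZZ/d\ZZ$ with steps in $\{a,b\}$ and total displacement $\ell_i d$, the walk crosses \emph{every} arc of the circle exactly $\ell_i$ times. If you excise an arc of length $b$ and renumber modulo $d'=d-b$, each of the $\ell_i$ crossings of that arc is distorted, not just the one at your chosen witness $p$: jumps that merely pass over the excised arc get shortened by the overlap and generally end up with sizes outside $\{a,b\}$, so the contracted object is no longer a permutation in any $P_{a,b,d',A',B'}$. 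Equivalently, removing a single $b$-jump cannot work arithmetically: the new cycle would have displacement $aA_i+b(B_i-1)=\ell_i d-b=\ell_i(d-b)+(\ell_i-1)b$, which is divisible by $d'=d-b$ only when $(d-b)\mid(\ell_i-1)b$ --- automatic for $\ell_i=1$, false in general otherwise. So neither of your proposed bypasses (minimal witness arc, or multi-surgery) is a routine fix; a correct multi-surgery would have to modify $\ell_i+\ell_j$ jumps coherently across both cycles, and you give no construction. Two further unaddressed points: your ``base case $\ell_i=\ell_j=1$'' is not a base case for an induction on $d$, and the degenerate situation where some $A_i$ or $B_i$ vanishes (handled separately and by a different argument in Lemma~\ref{lem:winding1}) is never discussed.

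The paper avoids all of this by going in the opposite direction: rather than contracting to \emph{reduce} winding numbers, it \emph{expands} the modulus to normalize them to $1$. After a preliminary multiplicative change of coordinates (multiplying all residues by the inverse of $a/\GCD(a,d)$) so that the new $a'$ divides $d$ and $\GCD(a',b')=1$ --- which guarantees the coprimality hypothesis survives a change of modulus --- it sets $d'=d\cdot\LCM(\ell_1,\ell_2)$ and unrolls each $C_i$ by concatenating its step-word with itself $\ell_{3-i}'$ times, where $\ell_i=m\ell_i'$ and $m=\GCD(\ell_1,\ell_2)$. The resulting cycles in $\ZZ/d'\ZZ$ are disjoint and have winding number $1$, so Lemma~\ref{lem:winding1} applies verbatim and yields $A_1\ell_2'=A_2\ell_1'$ and $B_1\ell_2'=B_2\ell_1'$; combined with Lemma~\ref{lem:GCD=1} this forces $\ell_1'=\ell_2'=1$ and hence $A_1=A_2$, $B_1=B_2$. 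If you want to salvage your approach you would need to supply the coherent multi-surgery; as written, the crux of the lemma is left unproved.
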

\begin{proof}
Without loss of generality we assume $i=1$ and $j=2$.

Our aim is to rearrange the cycles $C_1$ and $C_2$, possibly changing $a,b,d$, in such a way that we can apply Lemma \ref{lem:winding1}.

\textbf{Reduction 1:} We first reduce to the situation when $a|d$ and $\GCD(a,b)=1$. Precisely, we construct new cycles, without changing $A_1,A_2,B_1,B_2$ and $d$, however with new $a'$ and $b'$, such that $\GCD(a',b')=1$ and $a'|d$.

Let $a':=\GCD(a,d)$ and $a=za'$.
Let $z'<d$ be such that $z\cdot z'\equiv 1$ modulo $d$. Let $d>b':=bz'$ modulo $d$. We rearrange the rests $\{0,1,\dotsc,d-1\}$ modulo $d$ by multiplying them by $z'$. Precisely, we change the cycle $C_1=(s_1,\dotsc,s_{A_1+B_1})$ and $C_2= (s_1',\dotsc,s_{A_2+B_2}')$ respectively to $C_1'=(z's_1,\dotsc,z's_{A_1+B_1})$ and $C_2'= (z's_1',\dotsc,z's_{A_2+B_2}')$. These are clearly two disjoint cycles with possible differences $a'$ and $b'$. Further $a'|d$ and $\GCD(a',b',d)=\GCD(a',bz',d)$. Note that $\GCD(a',b,d)=1$ (as $a'|a$) and $\GCD(a',z',d)=1$ (as $z'$ and $d$ are coprime), hence $\GCD(a',b',d)=1$.

\textbf{Reduction 2:}
Let $C_i'$ be as above, $i=1,2$.
We now reduce to the case of winding numbers $\ell_1=\ell_2=1$. Let $m:=\GCD(\ell_1,\ell_2)$ and $\ell_1=m\ell_1'$, $\ell_2=m\ell_2'$.

Let $d'=dm\ell_1'\ell_2'=d\LCM(\ell_1,\ell_2)$. We extend the cycle $C_1'$ (that is a permutation of $d$ elements) to a cycle $C_1''$ (that is a permutation of $d'$ elements) as follows. Say $C_1'=(c_1,\dotsc,c_{A_1+B_1})$, where $c_1$ is the smallest integer appearing in $C_1'$. We may encode it as a word, with letters $a'$ and $b'$, of length $A_1+B_1$:
$$w:=c_2-c_1,\dotsc,c_{A_1+B_1}-c_{A_1+B_1-1}.$$
We concatenate the word $w$ with itself $\ell_2'$ times, obtaining a word $w^{\circ \ell_2'}$ with exactly $A_1\ell_2'$ letters $a'$ and $B_1\ell_2'$ letters $b'$. The word $w^{\circ \ell_2'}$ encodes the cycle $C_1''$, that starts at $c_1$ with differences $c_{i+1}-c_i$ equal to either $a'$ or $b'$, according to $w^{\circ \ell_2'}$. In the analogous way we obtain a cycle $C_2''$ with $A_2\ell_1'$ differences $a'$ and $B_2\ell_2'$ differences $b'$.

We are now in position to apply Lemma \ref{lem:winding1}. As $a'|d$, we must have $\GCD(a',b')=1$. In particular, $\GCD(a',b',d')=1$. Further, the cycles $C_1''$ and $C_2''$ have their winding numbers $\ell_1''=\ell_2''=1$. They are also disjoint, as their reductions modulo $d$ coincide with $C_1$ and $C_2$ that are disjoint. Hence, by Lemma~\ref{lem:winding1}, $A_1\ell_2'=A_2\ell_1'$ and $B_1\ell_2'=B_2\ell_1'$.


In particular, $\ell_1'|A_1\ell_2'$ and as $\GCD(\ell_1',\ell_2')=1$ we have $\ell_1'|A_1$. In the same way $\ell_1'|B_1$ and by definition $\ell_1'|\ell_1$. Thus, by Lemma \ref{lem:GCD=1} we must have $\ell_1'=1$. Analogously $\ell_2'=1$. Hence, indeed $A_1=A_2$ and $B_1=B_2$.
\end{proof}


Recall $A=\sum_{i=1}^{k} A_i$, $B=\sum_{i=1}^{k} B_i$, and let $\ell:=\frac{aA+bB}{d}$.
\begin{lemma}
We have $k=\GCD(A,B,\ell)$.
\end{lemma}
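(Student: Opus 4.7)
The plan is to combine the two main structural lemmas just proved: Lemma~\ref{lem:relOFcycles} shows that all cycles share the same $A_i$ and $B_i$, while Lemma~\ref{lem:GCD=1} shows that within each cycle these quantities together with the winding number are coprime. Together these force $k$ to equal the claimed GCD.

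More precisely, I would first invoke Lemma~\ref{lem:relOFcycles} to set $A':=A_1=A_2=\dotsb=A_k$ and $B':=B_1=B_2=\dotsb=B_k$. Summing over the cycles then yields $A=kA'$ and $B=kB'$, and, writing $\ell':=\ell_1=\frac{aA'+bB'}{d}$, we similarly get $\ell=\frac{aA+bB}{d}=k\ell'$. In particular $k$ is a common divisor of $A$, $B$, and $\ell$, so
\[
k \,\mid\, \GCD(A,B,\ell).
\]

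For the reverse divisibility, I would apply Lemma~\ref{lem:GCD=1} with $i=1$ to conclude $\GCD(A',B',\ell')=1$. Pulling the factor $k$ out of each argument,
\[
\GCD(A,B,\ell) \;=\; \GCD(kA',\,kB',\,k\ell') \;=\; k\cdot\GCD(A',B',\ell') \;=\; k,
\]
which completes the argument.

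No genuine obstacle arises here: the substantive content has already been isolated in the preceding two lemmas, and this step is just an arithmetic bookkeeping computation. The only thing to be slightly careful about is that Lemma~\ref{lem:relOFcycles} is used to replace the individual $A_i,B_i$ by their common values before pulling $k$ out of the GCD, and that Lemma~\ref{lem:GCD=1} is applied to a single cycle (say $i=1$) rather than to the global triple $(A,B,\ell)$.
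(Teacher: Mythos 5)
Your proposal is correct and follows essentially the same route as the paper: both use Lemma~\ref{lem:relOFcycles} to write $A=kA_1$, $B=kB_1$, $\ell=k\ell_1$, then pull $k$ out of the GCD and apply Lemma~\ref{lem:GCD=1} to conclude $\GCD(A,B,\ell)=k\cdot\GCD(A_1,B_1,\ell_1)=k$. The split into two divisibility directions is just a more verbose presentation of the same one-line computation.
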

\begin{proof}
By Lemma \ref{lem:relOFcycles} we know that $A=kA_1$, $B=kB_1$ and $\ell=k\ell_1$. We conclude by Lemma \ref{lem:GCD=1}:
$$k=k\GCD(A_1,B_1,\ell_1)=\GCD(kA_1,kB_1,k\ell_1)=\GCD(A,B,\ell).$$
\end{proof}
By Lemma~\ref{lem:per,det}, the results of this subsection imply Theorems~\ref{mainTHM} and~\ref{thm:coef}. $\square$

\section{On the minimality of GT-systems}\label{sectionGT}

In this section, we will apply the results on the determinant of a three-lines circulant matrix obtained in the previous section  to study the minimality of GT-systems and to solve a conjecture stated by Mezzetti and Mir\'{o}-Roig in \cite{MM-R2}. To state this conjecture we need first to introduce some definitions.

\begin{defn}\label{wlp}
Let $I\subset R $ be a homogeneous artinian ideal. We say that   $I$ has the \emph{Weak Lefschetz Property} (WLP, for short)
 if there is a  $L \in [R/I]_1$ such that, for all
integers $j$, the multiplication map
\[
\times L\colon [R/I]_{j-1} \to [R/I]_j
\]
has maximal rank, i.e. it is either injective or surjective.
\end{defn}
To establish whether an ideal $I\subset R$ has the WLP is a  difficult and  challenging problem
 and even in simple
cases, such as complete intersections, much remains unknown about the presence of the WLP.
 Recently the failure of the WLP has been connected to a large number of problems,
that appear to be unrelated at first glance.
For example, in \cite{MM-RO}, Mezzetti, Mir\'{o}-Roig and Ottaviani proved
that the failure of the WLP is related to the existence of varieties satisfying at least one Laplace
equation of order greater than 2; we recall that a $k$-dimensional variety $X\subset\PP^n$
\emph{satisfies $r$ Laplace equations of  order $d$}
if for any parametrization $F=F(t_1,\dotsc,t_k)$ of $X$ around a smooth, general point,  $F$ satisfies a system of $r$
(linearly independent) PDE's with constant coefficients of order $d$. Their result is the following:

\begin{theorem}(\cite[Theorem 3.2]{MM-RO})\label{tea} Let $I\subset R$ be an artinian
ideal
generated
by $r$ homogeneous polynomials $F_1,\dotsc,F_{r}$ of degree $d$ and let $I^{-1}$ be its Macaulay inverse system.
If
$r\le \binom{n+d-1}{n-1}$, then
  the following conditions are equivalent:
\begin{itemize}
\item[(1)] the ideal $I$ fails the WLP in degree $d-1$;
\item[(2)] the  homogeneous forms $F_1,\dotsc,F_{r}$ become
$k$-linearly dependent on a general hyperplane $H$ of $\PP^n$;
\item[(3)] the $n$-dimensional   variety
 $X=\overline{\Image (
\varphi )}$
where
$\varphi \colon\PP^n \dashrightarrow \PP^{\binom{n+d}{d}-r-1}$ is the  rational map associated to $(I^{-1})_d$,
  satisfies at least one Laplace equation of order
$d-1$.
\end{itemize}
\end{theorem}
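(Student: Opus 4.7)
The plan is to establish the two equivalences separately: (1) $\Leftrightarrow$ (2) by a direct analysis of the kernel of the multiplication map, and (2) $\Leftrightarrow$ (3) through Macaulay duality, which translates linear dependencies on a hyperplane into linear relations among the derivatives parametrizing the osculating spaces of $X$.

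For (1) $\Leftrightarrow$ (2), I would first set up the dimensions. Since $I$ is generated in degree $d$, we have $(R/I)_{d-1} = R_{d-1}$ of dimension $\binom{n+d-1}{n}$, while $\dim (R/I)_d = \binom{n+d}{n} - r$. The hypothesis $r \le \binom{n+d-1}{n-1}$ gives $\dim (R/I)_{d-1} \le \dim (R/I)_d$, so maximal rank of $\times L$ means injectivity. Thus the WLP fails in degree $d-1$ precisely when there exist $c_1, \dotsc, c_r \in \CC$ not all zero and a nonzero $F \in R_{d-1}$ with $LF = \sum_i c_i F_i$ in $R_d$. Restricting both sides to the hyperplane $H = \{L = 0\}$ yields the nontrivial relation $\sum_i c_i\, F_i|_H = 0$ in $\CC[H]_d$. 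Conversely, any nontrivial linear dependence among the $F_i|_H$ lifts to a combination $\sum c_i F_i$ that vanishes on $H$, hence is divisible by $L$, producing a nonzero kernel element of $\times L$. Testing with a general $L$ gives (2).

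For (2) $\Leftrightarrow$ (3), the key input is apolarity. The rational map $\varphi$ is parametrized by a basis $\{G_1, \dotsc, G_s\}$ of $(I^{-1})_d$, with $s = \binom{n+d}{d} - r$, where $(I^{-1})_d$ is the apolar space to $I_d$ under the perfect contraction pairing between $R_d$ and its dual. The projective $(d-1)$-th osculating space to $X$ at a smooth point $\varphi(q)$ is spanned by the evaluations at $q$ of all partial derivatives $\partial^\alpha G_j$ with $|\alpha| \le d-1$. A Laplace equation of order $d-1$ is a nontrivial linear relation among these $(d-1)$-st order data; via the apolarity pairing, such a relation is equivalent to the existence of a nonzero element of $I_d$ divisible by the linear form $L$ dual to the osculating direction at $q$, which in turn is equivalent to a nontrivial linear dependence among the $F_i|_H$ for $H = \{L = 0\}$. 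Generic choice of $q$ corresponds to generic $L$, closing the equivalence.

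The main obstacle will be the precise identification in the second step: one must show that the corank of the restriction map $I_d \to \CC[H]_d$ equals the number of independent Laplace equations of order $d-1$ satisfied by $X$ at a general point. This requires a Terracini-type computation of the osculating space of a parametric variety, combined with the observation that the perpendicular space to $I_d$ under the apolarity pairing is exactly $(I^{-1})_d$. Once this duality is set up, the cokernel in question matches the projective defect of the $(d-1)$-th osculating variety of $X$, and all three conditions are governed by the same numerical invariant of the triple $(I, L, d)$.
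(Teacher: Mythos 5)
This statement is imported verbatim from \cite[Theorem 3.2]{MM-RO}; the paper under review gives no proof of it, only the citation. Your outline follows exactly the standard argument of that reference, so there is no divergence of approach to compare.

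On correctness: your treatment of $(1)\Leftrightarrow(2)$ is complete and right. The dimension count $\dim(R/I)_{d-1}=\binom{n+d-1}{n}\le\binom{n+d}{n}-r=\dim(R/I)_d$ under the hypothesis $r\le\binom{n+d-1}{n-1}$ correctly reduces maximal rank to injectivity, and the back-and-forth between a kernel element $F$ with $LF=\sum c_iF_i$ and a dependence $\sum c_iF_i|_H=0$ is sound. One small point you should make explicit: in the direction $(2)\Rightarrow(1)$ you need $\sum c_iF_i\ne 0$ in $R_d$ to conclude $F\ne 0$; this holds because the $F_1,\dotsc,F_r$ are assumed to be $r$ (hence linearly independent) generators of $I_d$.

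The $(2)\Leftrightarrow(3)$ step, however, is a plan rather than a proof, and you say so yourself. The entire content of that equivalence is the identity you defer: that the $(d-1)$-th osculating space of $X$ at $\varphi(q)$ is, under the contraction pairing, the image of $\ell_q\cdot R_{d-1}$ in $R_d/I_d$, where $\ell_q$ is the linear form with coefficients the coordinates of $q$ (not, as you phrase it, a form ``dual to the osculating direction''). Concretely, one uses $\partial^\alpha G(q)=c_\alpha\,\langle x^\alpha\ell_q^{\,d-|\alpha|},G\rangle$ and the fact that $\{x^\alpha\ell_q^{\,d-1-|\alpha|}:|\alpha|\le d-1\}$ spans $R_{d-1}$, so the functionals cutting out the osculating space span $\ell_q R_{d-1}$ modulo $I_d=((I^{-1})_d)^\perp$; the osculating defect then equals $\dim\ker\bigl(\times\ell_q\colon R_{d-1}\to(R/I)_d\bigr)$, which ties (3) back to (1) and (2). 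Until that computation is written out, the second equivalence is asserted, not established.
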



The above result motivated the following definitions:

\begin{defn} Let $I \subset R$ be an artinian ideal generated by $r$ forms  of degree $d$, and $r \leq \binom{n+d-1}{n-1}$. We will say:
\begin{itemize}
\item[(i)] $I$ is a \emph{Togliatti system} if it satisfies one of three equivalent conditions in Theorem \ref{tea}.
\item[(ii)] $I$ is a \emph{monomial Togliatti system} if, in addition, $I$ can be generated
by monomials.
\item[(iii)] $I$ is a \emph{smooth Togliatti system} if, in addition, the rational variety $X$ is smooth.
\item[(iv)] A monomial Togliatti system $I$ is \emph{minimal} if  there is no proper subset of the set of generators defining a monomial Togliatti system.
\end{itemize}
\end{defn}

These definitions were introduced in \cite{MM-RO} and \cite{MM-R1} and the names are in honor of Eugenio Togliatti who proved that for
$n = 2$ the only smooth
Togliatti system of cubics is $$I = (x_0^3,x_1^3,x_2^3,x_0x_1x_2)\subset \CC[x_0,x_1,x_2]$$
(see \cite{BK}, \cite{T1} and \cite{T2}). The systematic study of Togliatti systems was initiated in \cite{MM-RO}
and for recent results the reader can see \cite{MichM-R},  \cite{MM-R1}, \cite{AM-RV}, \cite{M-RS} and \cite{MM-R2}.
Precisely in the latter reference the authors introduced the notion of \emph{GT-system} which we recall now.

\begin{defn}\label{Defi:GT}
  A \emph{GT-system} is an artinian ideal $I\subset \CC[x_0, x_1,\dotsc ,x_n]$ generated by $r$ forms $F_{1},\dotsc,F_{r}$ of degree $d$
  such that:
\begin{enumerate}
\item[i)] $I$ is a Togliatti system.
\item[ii)] The regular map $\phi_{I}\colon\PP^{n}\rightarrow \PP^{r-1}$ defined by $(F_{1},\dotsc,F_{r})$ is a Galois covering of degree $d$ with cyclic Galois group $\ZZ /d\ZZ$.
\end{enumerate}
\end{defn}

Any representation of the cyclic group $\ZZ  /d\ZZ$ as subgroup of $GL(n+1,\CC)$ can be diagonalized.
In particular it is represented by a matrix of the form
$$M:=M_{\alpha_0,\alpha_1,\dotsc,\alpha_{n}}=\begin{pmatrix} e^{\alpha _0} & 0 & \dotsc & 0 \\ 0 & e^{\alpha _1} &  \dotsc & 0 \\
 &  & \dotsc &  \\ 0 & 0 &\dotsc & e^{\alpha _{n}}
\end{pmatrix}$$
where $e$ is a primitive $d$th root of $1$ and $\alpha_0,\alpha_1,\dotsc,\alpha_{n}$ are integers with
$$\GCD(\alpha_0,\alpha_1,\dotsc,\alpha_{n},d)=1.$$ 
It follows (see \cite[Proposition 4.6]{CMMRS}) that the above definition is equivalent to the next one:

\begin{defn}
Fix integers $3\le d\in \ZZ $, $2\le n\in \ZZ $, with $n\leq d$, and $0\le \alpha_0\le \alpha_1\le \dotsb \le \alpha_{n}\le d$, $e$ a primitive $d$-th root of 1 and $ M_{\alpha_0,\alpha_1,\dotsc,\alpha_{n}}$ a representation of $\ZZ /d\ZZ$ in $GL(n+1,\CC)$. A \emph{GT-system} will be an ideal $$I^d_{\alpha_0,\dotsc,\alpha _{n}}\subset \CC[x_0,x_1,\dotsc,x_{n}]$$ generated by all forms of degree $d$ invariant under the action of   $ M_{\alpha_0,\alpha_1,\dotsc,\alpha_{n}}$ provided the number of generators $\mu (I^d_{\alpha_0,\dotsc,\alpha _{n}})\le  \binom{n+d-1}{n-1}$.
\end{defn}

\begin{remark}
It is an immediate consequence of the above description that the ideal $I^d_{\alpha_0,\dotsc,\alpha_n}$ is always monomial,
i.e. a GT-system is a monomial Togliatti system.
\end{remark}

\begin{remark}\label{remark:minim}
Indeed in the proof of \cite[Proposition 4.6]{CMMRS}, the authors observed that $I:=I^d_{\alpha_0,\dotsc,\alpha _{n}}$ fails the Weak Lefschetz Property from degree $d-1$ to degree $d$, because
for any linear form $\ell \in R$ the induced map $\times \ell :[R/I]_{d-1}\longrightarrow [R/I]_d$ is not injective.
By \cite[Proposition 2.2]{MM-RN},  since $I$ is monomial, it is enough to check it for $\ell = x_0+x_1+\dotsm +x_{n}$.
This is equivalent to prove that   there exists a form
$F_{d-1}\in R$ of degree $d-1$ such that $(x_0+x_1+\dotsm +x_{n})\cdot F_{d-1}\in I$.
Consider $F_{d-1}=(e^{\alpha _0}x_0+e^{\alpha _1}x_1+\dotsm
+e^{\alpha _{n}}x_{n})(e^{2\alpha _0}x_0+e^{2\alpha _1}x_1+\dotsm +e^{2\alpha _{n}}x_{n})
\dotsc
(e^{(d-1)\alpha _0}x_0+e^{(d-1)\alpha _1}x_1+\dotsm +e^{(d-1)\alpha _{n}}x_{n}).$
The homogeneous form of degree $d$, $F=(x_0+x_1+\dotsm +x_{n})\cdot F_{d-1}$ is invariant under the action of
$M_{\alpha_0,\alpha_1,\dotsc,\alpha_{n}}$, hence, it belongs to $I$.
\end{remark}

In the following, since we are interested in the projective space and in the action of $M_{\alpha_0,\alpha_1,\dotsc,\alpha_{n}}$ up to proportionality, we will assume that the first exponent $\alpha_0$ is equal to zero. Moreover, $R$ will always denote the polynomial ring in three variables: $R=\CC[x,y,z]$.

To determine the minimality of a GT-system is a subtle problem. In \cite{MM-R2}, in the case of three variables, the second and fourth authors proved that the ideal $I^d_{0,a,b}$ always satisfies the condition on the number of generators  $\mu(I)\leq d+1$, and conjectured the following, which we now prove using Theorem~\ref{mainTHM}:

\begin{theorem}(\cite[Conjecture 4.6]{MM-R2})\label{thmGT}
Let $d\geq3$ be an integer and $M_{a,b}$ be a $3\times 3$-matrix representing the cyclic group $\ZZ/d\ZZ$ with $1\leq a< b\leq d-1$ such that $\GCD(a,b,d)=1$. Let $I=I^d_{0,a,b}\subset R=\CC[x,y,z]$ be the ideal generated by all the monomials of degree $d$ invariant under the action of $M_{a,b}$. Then $I$ is a minimal GT-system.
\end{theorem}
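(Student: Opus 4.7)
The plan is to combine the explicit Togliatti witness $F$ constructed in Remark \ref{remark:minim} with the non-vanishing of coefficients from Theorem \ref{mainTHM}, showing that on the hyperplane $H := \{x+y+z=0\}$ the space of linear dependences among the generators of $I$ is exactly one-dimensional, spanned by a relation whose coefficient on every generator is nonzero.

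Let $m_1, \dotsc, m_r$ denote the monomial generators of $I = I^d_{0,a,b}$, i.e.~the degree-$d$ monomials $x^{d-A-B} y^A z^B$ with $aA+bB\equiv 0 \pmod d$. By the definition of minimality it suffices to show, for each $i$, that the proper sub-ideal $I' := \langle m_j : j \neq i \rangle$ is not a Togliatti system. Since linear independence of finitely many forms on a hyperplane is Zariski-open in the choice of hyperplane, the equivalence (1)$\Leftrightarrow$(2) of Theorem \ref{tea} reduces this to producing a single linear form $L$ such that the images of $\{m_j\}_{j\neq i}$ in $R/(L)$ remain linearly independent. I will take $L := x+y+z$.

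Consider
\begin{equation*}
F := \det(\Circ_{(d;0,a,b)}(x,y,z)) = \prod_{k=0}^{d-1}(x + e^{ka} y + e^{kb} z) = L \cdot F_{d-1},
\end{equation*}
with $F_{d-1}$ as in Remark \ref{remark:minim}. The factorisation shows $F$ vanishes on $H$, while $M_{a,b}$-invariance gives $F \in I_d$; expanding in the monomial basis yields a dependence $F = \sum_{j=1}^r c_j m_j$ among the generators on $H$.

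The crux is uniqueness of such a dependence. Suppose $G \in R_{d-1}$ satisfies $LG \in I_d$. Since $I_d$ is the $M_{a,b}$-invariant subspace of $R_d$, the product $LG$ is invariant; applying $M_{a,b}^k$ yields $LG = L_k \cdot M_{a,b}^k(G)$, where $L_k := x + e^{ka} y + e^{kb} z$. The hypothesis $\GCD(a,b,d)=1$ forces $L_0, L_1, \dotsc, L_{d-1}$ to be pairwise non-proportional: $L_j \propto L_k$ would demand $(j-k)a \equiv (j-k)b \equiv 0 \pmod d$ and hence $d \mid (j-k)\GCD(a,b,d) = j-k$. For $k \neq 0$ the linear forms $L$ and $L_k$ are coprime, so from $L_k \cdot M_{a,b}^k(G) = L\cdot G$ we deduce $L_k \mid G$. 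Thus the pairwise coprime forms $L_1, \dotsc, L_{d-1}$ all divide $G$, so $F_{d-1} = \prod_{k=1}^{d-1} L_k$ divides $G$; a degree comparison gives $G = \lambda F_{d-1}$ for some scalar $\lambda$. Consequently the dependence space is one-dimensional, spanned by $F$.

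Theorem \ref{mainTHM} now supplies the finishing input: since $\GCD(a,b,d)=1$, every coefficient $c_j = [F]_{d-A-B,A,B}$ of $F$ on a generator $m_j = x^{d-A-B}y^Az^B$ is nonzero. In particular, no nonzero element of the one-dimensional dependence space has vanishing coefficient on any $m_i$; equivalently, after removing $m_i$ the images of $\{m_j\}_{j\neq i}$ in $R/(L)$ remain linearly independent. Hence $\times L\colon R_{d-1} \to R_d/I'_d$ is injective, so $I'$ does not fail the WLP in degree $d-1$, and by Theorem \ref{tea} it is not a Togliatti system. This proves that $I$ is a minimal GT-system.

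The main obstacle is the uniqueness step: with only the existence of one dependence in hand, a higher-dimensional kernel could in principle harbour a relation vanishing on some generator, even while $F$ has all coefficients nonzero. The Galois-type divisibility argument circumvents this precisely because the $\GCD$ hypothesis makes the forms $L_k$ pairwise distinct, and that same hypothesis is what powers Theorem \ref{mainTHM}; the two halves of the proof interlock cleanly.
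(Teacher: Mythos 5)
Your proposal is correct and follows essentially the same route as the paper: the one-dimensionality of the dependence space is proved by the same Galois/divisibility argument showing each factor $x+e^{ka}y+e^{kb}z$ divides $G$, and the non-vanishing of every coefficient of $\det(\Circ_{(d;0,a,b)})=\prod_k(x+e^{ka}y+e^{kb}z)$ on the invariant monomials is supplied by Theorem \ref{mainTHM} exactly as in the paper. The only cosmetic difference is that you phrase the conclusion via condition (2) of Theorem \ref{tea} and Zariski-openness of linear independence on hyperplanes, whereas the paper argues directly with the kernel of $\times(x+y+z)$ on $[R/I]_{d-1}$.
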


\begin{proof}
As observed in Remark \ref{remark:minim}, the form $$F_{d-1}:=(x+e^{a}y+e^{b}z)\dotsb(x+e^{(d-1)a}y+e^{(d-1)b}z)$$
 is in the kernel of $\times(x+y+z):[R/I]_{d-1}\rightarrow[R/I]_{d}$,
so the dimension of the kernel 
is $\ge 1$, thus $I$ is a GT-system.
We now work towards showing its minimality.

\noindent \textbf{Claim:} The dimension of the  kernel $K_{d-1}$ of $\times (x+y+z):[R/I]_{d-1}\rightarrow[R/I]_{d}$ is one.

\begin{proof}[Proof of the claim:]
We will prove that $F_{d-1}$ generates  $K_{d-1}$. 

Assume that $G_{d-1}(x,y,z)$ is a form of degree $d-1$
which  belongs to $K_{d-1}$. We will prove that $(x+e^ay+e^bz)$ divides
$G_{d-1}(x,y,z)$. Analogously  the other factors of $F_{d-1}$
divide $G_{d-1}(x,y,z)$, and we are done. Since $G_{d-1}(x,y,z)$ belongs to
$K_{d-1}$, we have $(x+y+z)G_{d-1}(x,y,z) \in I$. So the  form $H(x,y,z):=(x+y+z)G_{d-1}(x,y,z)$ of degree $d$ is in  $I$.

Since $H(x,y,z)$ belongs to $I$, it is invariant under the
action of $M_{a,b}$ and we have

\begin{equation*}(x+e^ay+e^bz)G_{d-1}(x,e^ay,e^bz)=H(x,e^ay,e^bz)=H(x,y,z)=(x+y+z)G_{d-1}(x,y,z),
\end{equation*}

\noindent which allows us to conclude that $(x+e^ay+e^bz)$ divides $G_{d-1}(x,y,z)$.
  \end{proof}

Our next claim is that the Togliatti system $I$ is minimal if and only if all  monomials of degree $d$, which are invariant under the action of $M_{a,b}$,  appear with non-zero coefficient in the form \begin{equation}\label{Eq:circulant}
 C_{d;a,b}:=(x+y+z)(x+e^{a}y+e^{b}z)\dotsb(x+e^{(d-1)a}y+e^{(d-1)b}z)=(x+y+z)F_{d-1}.
 \end{equation} One implication is obvious. For the other,
assume that $I$ is not a minimal Togliatti system: this means that there is an ideal $J$, strictly contained in $I$, which is again a Togliatti system. Let $G_1, \dotsc, G_s$ be a system of generators of $J$. Then for any linear form $\ell$ there is a form $G$ such that $\ell G$ is a linear combination of  $G_1,\dotsc, G_s$.
In particular, $(x+y+z) G$ belongs to $I$, therefore $G$ is in the kernel of the map $\times (x+y+z)\colon [R/I]_{d-1}\rightarrow[R/I]_{d}$. Since the kernel has dimension one, by the Claim, it follows that $G$ is a scalar multiple of $F_{d-1}$. We conclude that not all invariant monomials appear with non-zero coefficient in $C_{d;a,b}$.

We easily observe that $C_{d;a,b}$ coincides with the determinant of the circulant matrix $\Circ_{(d;0,a,b)}$. On the other hand, a monomial $x^{\alpha}y^{\beta}z^{\gamma}$ of degree $d$ is invariant under the action of $M_{a,b}$ if, and only if, it satisfies the following system of equations:
\begin{equation*}
\begin{cases}
\alpha+\beta+\gamma=d&\\
a\beta+b\gamma\equiv0&\pmod d
\end{cases}
\end{equation*}
or, equivalently, the monomials appearing in the permanent $\per(\Circ_{(d;0,a,b)})$ with non-zero coefficient are exactly all the monomials of degree $d$ invariant by the action of $M_{a,b}$.



Thus, to conclude the proof we need the equality $D_{(d;0,a,b)}=P_{(d;0,a,b)}$ to hold, which is indeed the case by Theorem \ref{mainTHM}.
\end{proof}

\begin{remark} It is worthwhile to underline the following interpretation of our results in terms of representation theory of cyclic matrix groups.

In the proof of Theorem \ref{thmGT} we have proved that a monomial  of degree $d$ in three variables is invariant under the action of the cyclic matrix group of order $d$, generated by  $M_{a,b}$,  if and only if it appears with non-zero coefficient in the form $C_{d;a,b}$ of (\ref{Eq:circulant}).
So we get information about a minimal system of generators for the homogeneous component of degree $d$ of the ring of invariants of these representations.

Up to a coefficient $d$, the polynomial $C_{d;a,b}$ is the image of the linear form $x+y+z$ under the Reynolds operator. This is in fact the same point of view of Emmy Noether, when she proved the finiteness of the ring of invariants of a polynomial ring under the action of a finite matrix group \cite{N}.

From Example \ref{ex:r>3} it follows that a similar result is not true for polynomials in $r$ variables, with $r\geq 4$.
\end{remark}

\section{Computational complexity}\label{sec:complexity}
The task of computing the permanent of a $d$ by $d$ $(0/1)$-matrix is computationally hard ($\#P$-complete), by a celebrated theorem of Valiant~\cite{V}, and remains so even when there are only $3$ nonzero entries per row~\cite{DLMV}.
The best known upper bounds to compute the permanent are exponential in $d$, by Ryser~\cite{R}. Theorem~\ref{thm:coef} immediately tells us that computing the permanent of $\Circ_{(d;0,a,b)}$, when $\GCD(a,b,d)=1$, can be done in polynomial time in $d$, say by computing $\det(\Circ_{(d;0,a,b)})\in \mathbb{Z}[x,y,z]$ via polynomial interpolation (evaluating it on $O(d^2)$ points suffices).


\end{document}